    \def\MR#1{}
\Crefname{Lemma}{Lemma}{Lemmas}
\Crefname{Theorem}{Theorem}{Theorems}
\theoremstyle{plain}
\newtheorem{Question}{Question}
\newtheorem{Theorem}{Theorem}[section]
\newtheorem*{Theorem*}{Theorem}
\newtheorem{Lemma}[Theorem]{Lemma}
\newtheorem{Corollary}[Theorem]{Corollary}
\newtheorem{Proposition}[Theorem]{Proposition}
\theoremstyle{definition}
\newtheorem{Assumptions and Discussion}[Theorem]{Assumptions and Discussion}
\newtheorem{Example}[Theorem]{Example}
\newtheorem{Definition}[Theorem]{Definition}
\newtheorem{Remark}[Theorem]{Remark}
\newtheorem{Notation}[Theorem]{Notation}
\theoremstyle{remark}
\newtheorem*{acknowledgment*}{Acknowledgment}
\setlist{listparindent=0.7cm}
\def\lex{\operatorname{lex}}
\def\cl{\operatorname{cl}}
\def\Cell{\operatorname{Cell}}
\def\dim{\operatorname{dim}}
\def\LT{\operatorname{LT}}
\def\ini{\operatorname{in}} 
\def\isom{\cong}
\def\ker{\operatorname{ker}}
\def\KK{{\mathbb K}}
\def\lex{{\operatorname{lex}}}
\def\part{\operatorname{part}}
\def\std{\operatorname{std}} 
\def\trdeg{\operatorname{tr.deg}}
\def\ZZ{{\mathbb Z}}
\newcommand\bdlambda{{\bm \lambda}}
\newcommand\bdmu{{\bm \mu}}
\newcommand\bdP{{\bm P}}
\newcommand\bfp{\mathbf{p}}
\newcommand\bfq{\mathbf{q}}
\newcommand\bfT{\mathbf{T}}
\newcommand\bfx{\mathbf{x}}
\newcommand\bfy{\mathbf{y}}
\newcommand\calA{\mathcal{A}}
\newcommand\calF{\mathcal{F}}
\newcommand\calG{\mathcal{G}}
\newcommand\calJ{\mathcal{J}}
\newcommand\calL{\mathcal{L}}
\newcommand\calP{\mathcal{P}}
\newcommand\calQ{\mathcal{Q}}
\newcommand\calU{\mathcal{U}}
\newcommand\calV{\mathcal{V}}
\newcommand\calX{\mathcal{X}}
\newcommand\calY{\mathcal{Y}}
\newcommand\calZ{\mathcal{Z}}
\def\sA{\mathscr{A}}
\def\XY{\mathcal{XY}}
\def\QP{\mathcal{QP}}
\begin{document}

\title{Binomial edge rings associated to skew Ferrers diagrams}

\author[Kuei-Nuan Lin, Yi-Huang Shen]{Kuei-Nuan Lin and Yi-Huang Shen}

\thanks{\today}

\thanks{2020 {\em Mathematics Subject Classification}.
    Primary 
    13F65  	
    05E40 
    Secondary 14M25  	
}

\thanks{Keyword: Sagbi basis, Ferrers diagram, Krull dimension}

\address{Department of Mathematics, The Penn State University, McKeesport, PA,
15132, USA}
\email{linkn@psu.edu}

\address{School of Mathematical Sciences, University of Science and Technology of China, Hefei, Anhui, 230026, P.R.~China}
\email{yhshen@ustc.edu.cn}

\begin{abstract}
    In this study, we investigate the binomial edge ring associated with the skew Ferrers diagram. By employing \textsc{Sagbi} basis theory, we construct a quadratic Gr\"{o}bner basis for its defining ideal. As an application, we prove that this ring is a Koszul, Cohen--Macaulay, normal domain. Moreover, we precisely determine its Krull dimension.
\end{abstract}

\maketitle

\section{Introduction}

The main object of this work is the {binomial edge ring} associated with a graph $G$. This is the $\KK$-algebra generated by the generators of the {binomial edge ideal} associated with  $G$. In the present work, we will focus on the case where $G$ is 
a bipartite graph that can be represented by a skew Ferrers diagram.  

The concept of {binomial edge ideals} of finite simple graphs was independently introduced in \cite{MR2669070} and \cite{MR2782571}. These ideals are generated by the $2 \times 2$ minors of a $2 \times n$ generic matrix that are associated with the given graph. They exhibit deep connections with algebraic statistics, as discussed in \cite{DESLattice}, and have been extensively studied over the past fifteen years. Recently, Laclair, Mastroeni, McCullough, and Peeva \cite{LMMPeeva} established a complete characterization of the conditions under which the quotient ring of a binomial edge ideal is Koszul.  

Binomial edge ideals, which are generated by maximal minors of a $2 \times n$ generic matrix, are special determinantal ideals. Determinantal ideals play a central role in commutative algebra and algebraic geometry, and they form the core of the theory of Schubert varieties; see, for example, \cite{BV} and \cite{FultonPragaczSchubert}. 
The $\KK$-algebra generated by all maximal minors of a generic matrix is also a foundational object in these areas. As the homogeneous coordinate ring of the Grassmannian variety, its defining ideal is generated by the classical Pl\"{u}cker relations. This algebraic structure serves as a testing ground for investigating various mathematical properties, including homological properties, free resolutions, straightening laws, and aspects of singularity theory.

The defining equations of a $\KK$-algebra generated by a proper subset of maximal minors of a generic matrix remain largely unknown due to the intricate nature of determinantal relationships. Recently, Almousa, Lin, and Liske~\cite{ALL1} studied the special case where the selected maximal minors satisfy the unit interval property, while Higashitani examined binomial edge rings associated with complete bipartite graphs~\cite[Theorem~1.1]{arXiv:2411.07812}.

Complete bipartite graphs exhibit a natural correspondence with rectangular Ferrers diagrams. More generally, {Ferrers bipartite graphs} are derived from {Ferrers diagrams} (also known as {Young diagrams}). The associated edge ideal, termed a {Ferrers ideal}, constitutes a squarefree quadratic monomial ideal possessing a $2$-linear minimal free resolution. Corso and Nagel \cite{CN} conducted a systematic investigation of Ferrers ideal, including their associated toric rings. Subsequently, Corso, Nagel, Petrovi\'c, and Yuen \cite{CNPY} examined the case of skew Ferrers diagrams. Their research elucidated the connections between skew Ferrers diagrams with algebraic statistics. Furthermore, they showed that the $\KK$-algebra generated by the monomial generators of edge ideals associated with skew Ferrers diagrams are Koszul, Cohen--Macaulay, and normal domains.

Therefore, in this study, we aim to extend Higashitani's findings by investigating the binomial edge rings associated with skew Ferrers diagrams. We delineate below the primary objects of this research and summarize the key results obtained, with comprehensive details presented in subsequent sections.

A skew Ferrers diagram  $\boldsymbol{\lambda} / \boldsymbol{\mu}$
can be defined by two tuples (partitions) 
\(\boldsymbol{\lambda} = (\lambda_1,\dots,\lambda_a) \in \mathbb{Z}_{\ge 1}^a\) 
and 
\(\boldsymbol{\mu} = (\mu_1,\dots,\mu_a) \in \mathbb{Z}_{\ge 0}^a\), where
\(
\lambda_1 \ge \cdots \ge \lambda_a\), 
\(\mu_1 \ge \cdots \ge \mu_a\),
and \(\lambda_i \ge \mu_i\) for all \(1 \le i \le a\), with
\(a\) and $b\coloneqq \lambda_1$ being positive integers.  
This diagram \(\boldsymbol{\lambda} / \boldsymbol{\mu}\) naturally induces a bipartite graph $\calG=\calG_{\bdlambda/\bdmu}$ with vertex partition $\{x_1,\dots,x_a\}\sqcup\{y_1,\dots,y_b\}$, such that $\{x_i,y_j\}$ is an edge of $\calG$ if and only if $1\le i\le a$ and $\mu_i< j\le \lambda_i$.

For the bipartite graph \(\mathcal{G}\), we consider the polynomial ring
\[
    S \coloneqq \mathbb{K}[x_1,\ldots,x_a, \ p_1,\ldots,p_b, \ q_1,\ldots,q_a, \ y_1,\ldots,y_b]
\]
over a field \(\mathbb{K}\).  
The \emph{binomial edge ideal} \(J_{\mathcal{G}}\) associated with \(\mathcal{G}\) is generated by the binomials in the set
\[
    G(J_{\mathcal{G}}) \coloneqq 
    \left\{
        f_{i,j} \coloneqq x_i y_j - p_j q_i 
        \ \middle| \ 
        \{x_i, y_j\} \in E(\mathcal{G})
    \right\}.
\]

The central aim of this paper is to establish a \textsc{Sagbi} basis for the associated {binomial edge ring} \(\mathbb{K}[G(J_{\mathcal{G}})] \subseteq S\), and to describe a quadratic Gr\"obner basis for its defining ideal.  
To achieve this goal, we introduce auxiliary polynomials defined as
\[
    f_{i_1,j_1; \, i_2,j_2} 
    \coloneqq 
    f_{i_1,j_1} f_{i_2,j_2} - f_{i_1,j_2} f_{i_2,j_1},
\]
where \(1 \le i_1 < i_2 \le a\) and \(\mu_{i_1} < j_1 < j_2 \le \lambda_{i_2}\).  
The set \(H\), comprising all \(f_{i,j}\) along with \(f_{i_1,j_1; \, i_2,j_2}\), plays a crucial role in our analysis.

\begin{Theorem*}[Main result]
    \begin{enumerate}[label=\textnormal{(\alph*)}]
        \item The set \(H\) forms a \textsc{Sagbi} basis for the binomial edge ring \(\mathbb{K}[G(J_{\mathcal{G}})]\) with respect to an appropriate term order on \(S\).
        \item The defining ideal of \(\mathbb{K}[G(J_{\mathcal{G}})]\) admits a quadratic Gr\"obner basis.
        \item The ring \(\mathbb{K}[G(J_{\mathcal{G}})]\) is a Koszul, Cohen--Macaulay, normal domain.  
            Moreover, if \(\operatorname{char} \mathbb{K} = 0\), it has rational singularities;  
            if \(\operatorname{char} \mathbb{K} > 0\), it is \(F\)-rational.
    \end{enumerate}
\end{Theorem*}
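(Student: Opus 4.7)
My plan is to follow the standard SAGBI pipeline: pick a term order, verify the SAGBI criterion for $H$, lift to a quadratic Gr\"obner basis of the defining ideal, and harvest the homological consequences.

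\textbf{Term order and initial monomials.} First I would equip $S$ with a term order making $\ini(f_{i,j}) = x_i y_j$ for every edge $\{x_i,y_j\}$ of $\calG$---say a block lex order placing the $x,y$-variables above the $p,q$-variables, suitably refined on indices. Under such an order, expanding $f_{i_1,j_1} f_{i_2,j_2} - f_{i_1,j_2} f_{i_2,j_1}$ cancels the pure monomials $x_{i_1}y_{j_1}x_{i_2}y_{j_2}$ and $p_{j_1}q_{i_1}p_{j_2}q_{i_2}$, leaving four ``mixed'' monomials, and a careful refinement selects one (for example $x_{i_1}p_{j_1}q_{i_2}y_{j_2}$) as $\ini(f_{i_1,j_1;i_2,j_2})$. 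This mixed monomial genuinely enlarges the initial algebra exactly when $(i_1,j_2)\notin\bdlambda/\bdmu$, so that $f_{i_1,j_2}$ is not a generator and the monomial is not a product of $x_iy_j$'s; this is what makes the auxiliaries necessary in the skew situation.

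\textbf{SAGBI criterion and part (a).} By the Robbiano--Sweedler criterion, I would enumerate binomial syzygies among $\ini(H)$ and verify each subducts to zero modulo $H$. These split into two families: Pl\"ucker-type swaps $(x_{i_1}y_{j_1})(x_{i_2}y_{j_2}) = (x_{i_1}y_{j_2})(x_{i_2}y_{j_1})$, whose lifts yield precisely the $f_{i_1,j_1;i_2,j_2}$, and relations involving the degree-$4$ leading monomials of $H$, which should reduce via a cascade of $f_{i,j}$'s and smaller auxiliaries. The main obstacle is the combinatorial bookkeeping imposed by the skew shape: whenever a natural subduction step would invoke $f_{i,j}$ with $(i,j)\notin\bdlambda/\bdmu$, one must find a detour through other members of $H$. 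I anticipate a detailed case analysis refining Higashitani's complete-bipartite argument and the unit-interval analysis of Almousa--Lin--Liske.

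\textbf{Quadratic Gr\"obner basis and homological consequences.} Once (a) is in place, the standard SAGBI-to-Gr\"obner lifting converts toric relations among $\ini(H)$ into a Gr\"obner basis of the defining ideal of $\KK[G(J_\calG)]$. Showing these toric relations are generated in degree two---another skew-shape combinatorial check---gives (b), whence Koszulness follows. For the remaining parts of (c), $\ini(\KK[G(J_\calG)])$ is an affine semigroup ring; I would model it as a normal semigroup ring via an explicit distributive-lattice/Hibi-style realization on the skew cells, so that it is Cohen--Macaulay by Hochster's theorem. Normality and Cohen--Macaulayness transfer to $\KK[G(J_\calG)]$ via the SAGBI flat degeneration by the standard lifting results of Conca--Herzog--Valla. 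Rational singularities in characteristic $0$ and $F$-rationality in positive characteristic descend from the toric initial algebra through the same degeneration, using Boutot-type theorems for the former and the Hochster--Huneke theory of normal toric rings for the latter.
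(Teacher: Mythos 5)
Your outline follows the same pipeline as the paper (lex-type order with $\ini(f_{i,j})=x_iy_j$ and $\ini(f_{i_1,j_1;i_2,j_2})=x_{i_1}p_{j_1}q_{i_2}y_{j_2}$, then a SAGBI/lifting argument in the style of Conca--Herzog--Valla), but it defers all of the substantive content. Two essential steps are named and not carried out. First, you never produce the generators of the toric ideal of $\KK[\LT(H)]$, let alone a quadratic Gr\"obner basis for it; in the paper this occupies all of Section~3 and requires an explicit quadratic reduction system (four types of rewriting rules), a nontrivial verification that the rewritten variables still correspond to cells inside the skew shape, a termination argument via a $\chi$-statistic combined with a product order, and a delicate fiber-invariance argument showing that all monomials in the same fiber of $\varphi^*$ reduce to the same standard monomial. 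Second, the subduction/lifting step (your ``detailed case analysis'') is exactly where the skew shape bites: Higashitani's thirteen liftings for the complete bipartite case can involve variables $T_{i,j}$ or $T_{i_1,j_1;i_2,j_2}$ whose cells fall outside $\bdlambda/\bdmu$, and the paper must construct replacement liftings ($F_3'$, $F_4'$, $F_9'$, $F_{12}'$) with the same leading binomial but different lower-order terms. Asserting that ``one must find a detour'' does not establish that such detours exist, and finding them is the main technical contribution of the paper.

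Two further points. Your proposed route to normality --- ``an explicit distributive-lattice/Hibi-style realization on the skew cells'' --- is precisely the approach the paper identifies as failing outside the complete bipartite case; the initial algebra is no longer a Hibi ring of a poset. The working argument is different: once the toric ideal of $\KK[\LT(H)]$ is shown to have a quadratic Gr\"obner basis with \emph{squarefree} initial terms, normality follows from Sturmfels' Proposition~13.15, Cohen--Macaulayness from Hochster, and the singularity statements descend through the SAGBI degeneration. Finally, your remark that the mixed monomial ``genuinely enlarges the initial algebra exactly when $(i_1,j_2)\notin\bdlambda/\bdmu$'' is off: the auxiliaries $f_{i_1,j_1;i_2,j_2}$ are only defined when all four cells $(i_1,j_1),(i_1,j_2),(i_2,j_1),(i_2,j_2)$ lie in the diagram, and their leading monomials involve $p$ and $q$ variables, so they always lie outside $\KK[\{x_iy_j\}]$ regardless of the shape.
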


The present work constitutes a natural extension of Higashitani's study \cite{arXiv:2411.07812}, which employed \textsc{Sagbi} theory to characterize the defining ideal of binomial edge rings associated with complete bipartite graphs. Higashitani's proof crucially depended on the isomorphism between the initial algebra of the subalgebra and the Hibi ring of a specific poset. However, this correspondence fails to generalize to the broader context of skew Ferrers diagrams examined in our study.

Instead, we leverage the structure of skew Ferrers diagrams to draw out the relationships among initial monomials within $H$, the carefully selected collection of polynomials. Unlike the Hibi ring case, which benefits from an existing Gr\"{o}bner basis, our framework requires the explicit construction of a reduction process. Through the introduction of an appropriately defined product order, we validate both the Noetherian property of this process and the convergence of all monomials within the same fiber to identical standard monomials. These technical preparations culminate in the proof of our main results. Moreover, Theorem~\ref{thm:dim} provides an explicit computation of the Krull dimension of \(\mathbb{K}[G(J_{\mathcal{G}})]\).

\section{Settings}

Throughout this paper, we consider a fixed skew Ferrers diagram $\Gamma=\bdlambda/\bdmu$ defined by the tuples $\bdlambda=(\lambda_1,\dots,\lambda_a)\in \ZZ_{\ge 1}^a$ with $\lambda_1\ge \cdots \ge \lambda_a$ and $\bdmu=(\mu_1,\dots,\mu_a)\in \ZZ_{\ge 0}^a$ with $\mu_1\ge \cdots \ge \mu_a$. Specifically, $a$ denotes a positive integer, and the condition $\lambda_i\ge \mu_i$ holds for $i=1,2,\dots,a$. 

\begin{Example}
    \label{exam:base_diagram}
    An example with $\bdlambda=(6,5,5,3)$ and $\bdmu=(2,1,0,0)$ is depicted in \Cref{fig:6533}.  For instance, in this diagram, the cells $(2,3)$ and $(4,2)$ are marked with $\times$, while the cells $(1,6)$ and $(3,5)$ are marked with $\triangle$. Note that this style of representation is more consistent with the way entries in matrices are presented, as opposed to the conventional representation of points in a 2-dimensional Euclidean plane.
    \begin{figure}[h]
        \begin{tikzpicture}[x=2\unitlength,y=2\unitlength,inner sep=0pt,>=stealth]
            \draw [line width=0.4mm](50,80) rectangle (60,70);
            \draw [line width=0.4mm](60,80) rectangle (70,70);
            \draw [line width=0.4mm](70,80) rectangle (80,70);
            \draw [line width=0.4mm](80,80) rectangle (90,70);
            \draw [line width=0.4mm](40,70) rectangle (50,60);
            \draw [line width=0.4mm](50,70) rectangle (60,60);
            \draw [line width=0.4mm](60,70) rectangle (70,60);
            \draw [line width=0.4mm](70,70) rectangle (80,60);
            \draw [line width=0.4mm](40,60) rectangle (50,50);
            \draw [line width=0.4mm](50,60) rectangle (60,50);
            \draw [line width=0.4mm](60,60) rectangle (70,50);
            \draw [line width=0.4mm](70,60) rectangle (80,50);
            \draw [line width=0.4mm](30,60) rectangle (40,50);
            \draw [line width=0.4mm](40,50) rectangle (50,40);
            \draw [line width=0.4mm](50,50) rectangle (60,40);
            \draw [line width=0.4mm](30,50) rectangle (40,40);
            \draw (55,65) node {$\times$};
            \draw (45,45) node {$\times$};
            \draw (85,75) node {$\triangle$};
            \draw (75,55) node {$\triangle$};
            \draw [color=gray] (25,75) node {$1$};
            \draw [color=gray] (25,65) node {$2$};
            \draw [color=gray] (25,55) node {$3$};
            \draw [color=gray] (25,45) node {$4$};
            \draw [color=gray] (35,85) node {$1$};
            \draw [color=gray] (45,85) node {$2$};
            \draw [color=gray] (55,85) node {$3$};
            \draw [color=gray] (65,85) node {$4$};
            \draw [color=gray] (75,85) node {$5$};
            \draw [color=gray] (85,85) node {$6$};
        \end{tikzpicture}
        \caption{The skew diagram $\bdlambda/\bdmu$ with $\bdlambda=(6,5,5,3)$ and $\bdmu=(2,1,0,0)$}
        \label{fig:var}
        \label{fig:6533}
    \end{figure}
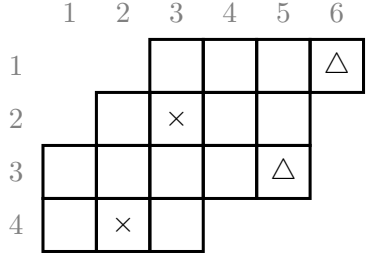
\end{Example}

Let $b\coloneqq \lambda_1$.
Consequently, we obtain an associated bipartite graph $\calG=\calG_{\bdlambda/\bdmu}$ with vertex partition $\{x_1,\dots,x_a\}\sqcup\{y_1,\dots,y_b\}$, such that $\{x_i,y_j\}$ is an edge of $\calG$ if and only if the cell $(i,j)$ belongs to the diagram $\Gamma$, i.e., $1\le i\le a$ and $\mu_i< j\le \lambda_i$.

For the aforementioned bipartite graph $\calG
$, we introduce the polynomial ring $S=\KK[\bfx,\bfp,\bfq,\bfy]$ over a field $\KK$, with variables ordered as
\[
    x_1>x_2>\cdots >x_a> p_1>p_2> \cdots > p_b> q_1>q_2>\cdots>q_a> y_1>y_2>\cdots>y_b.
\]
The lexicographic order on $S$ induced by this variable ordering is denoted by $>_{\lex}$. 
The binomial edge ideal $J_\calG$ of the bipartite graph $\calG$ is generated by the binomials in the finite set
\begin{equation}
    G(J_\calG)\coloneqq \{f_{i,j}\coloneqq \underline{x_i y_j}-p_jq_i\mid \{x_i,y_j\}\in E(\calG)\} 
    \label{eqn:f_ij}
\end{equation}
within the polynomial ring $S$. 
The main theme of this paper is to determine a \textsc{Sagbi} basis for the associated binomial edge ring $\KK[G(J_\calG)]\subseteq S$ and then describe a quadratic G\"{o}bner basis of the binomial edge ring $\KK[G(J_\calG)]$. For this purpose, we also introduce auxiliary polynomials
\begin{equation}
    f_{i_1,j_1;i_2,j_2}\coloneqq f_{i_1,j_1}f_{i_2,j_2}-f_{i_1,j_2}f_{i_2,j_1} = \underline{x_{i_1}p_{j_1}q_{i_2}y_{j_2}} + \cdots,
    \label{eqn:f_iijj}
\end{equation}
where $1\le i_1<i_2\le a$ and $u_{i_1}< j_1<j_2\le \lambda_{i_2}$.

\begin{Remark}
    The set \(G(J_{\mathcal{G}})\) does not constitute a Gr\"obner basis for the ideal it generates with respect to the lexicographic order, since the graph $\mathcal{G}$ is not closed~\cite[Theorem~1.1]{Closed}. Recall that a bipartite graph is closed if and only if it is a line~\cite[Corollary~1.3]{Closed}; however, our bipartite graph $\mathcal{G}$ is not a line in general. 
\end{Remark}

\begin{Notation}
    Let 
    \[
        H\coloneqq\{f_{i,j} \mid (i,j)\in \Gamma \} \cup \{ f_{i_1,j_1;i_2,j_2} \mid 1\le i_1<i_2\le a \text{ and } u_{i_1}< j_1<j_2\le \lambda_{i_2}\}
    \]
    be the set of polynomials defined in Equations \eqref{eqn:f_ij} and \eqref{eqn:f_iijj}. Additionally, let $\LT(H)$ represent the set of leading monomials of the elements in $H$ with respect to $>_{\lex}$. It can be readily verified that the underlined monomials in  \eqref{eqn:f_ij} and \eqref{eqn:f_iijj} are the leading monomials of their respective polynomials. Therefore, we obtain
    \[
        \LT(H)=\{x_iy_j \mid (i,j)\in \Gamma \} \cup \{ x_{i_1}p_{j_1}q_{i_2}y_{j_2}  \mid 1\le i_1<i_2\le a \text{ and } u_{i_1}< j_1<j_2\le \lambda_{i_2}\}.
    \] 
\end{Notation}

\section{Gr\"obner basis of the toric ideal}
\label{sec:GBtoricideal}

In this section, we compute a Gr\"obner basis for the defining ideal of the $\KK$-algebra $\KK[\LT(H)]$. Subsequently, we employ this result to derive a Gr\"obner basis for the defining ideal of the binomial edge ring $\KK[G(J_\calG)]$ in \Cref{sec:Sagbi}. 

To describe the defining ideals, we first introduce the presentation maps. Let $R=R_{\Gamma}\coloneqq \KK[\bfT]$ be the polynomial ring in the variables 
\[
    \{T_{i,j}\mid 1\le i\le a, 1\le j \le \lambda_i\}\sqcup \{T_{i_1,j_1;i_2,j_2}\mid 1\le i_1<i_2\le a,\mu_i< j_1<j_2\le \lambda_{i_2}\}
\]
over a field $\KK$. We begin with the presentation
\[
    \varphi:R\to \KK[H], \qquad T_{i,j}\mapsto f_{i,j}, \quad T_{i_1,j_1;i_2,j_2}\mapsto f_{i_1,j_1;i_2,j_2},
\]
along with the ``deformation'' presentation
\[
    \varphi^* :R\to 
    \KK[\LT(H)],
    \qquad T_{i,j}\mapsto x_iy_j, \quad T_{i_1,j_1;i_2,j_2}\mapsto x_{i_1}p_{j_1}q_{i_2}y_{j_2}.
\]

To better capture the underlying combinatorial structure, we require a broader framework.

\begin{Definition}
    \label{def:ambient}
    \begin{enumerate}[a]
        \item Let $\overline{\Gamma}\coloneqq \overline{\bdlambda}/\overline{\bdmu}$ be the rectangular Ferrers diagram, where $\overline{\bdlambda}=(b,b,\dots,b)$ and $\overline{\bdmu}=(0,0,\dots,0)$ represent two tuples in $\ZZ^a$. In this configuration, the associated graph $\overline{\calG}\coloneqq \calG_{\overline{\bdlambda}/\overline{\bdmu}}$ is the complete bipartite graph with the vertex partition $\{x_1,\dots,x_a\}\sqcup\{y_1,\dots,y_b\}$. 
        \item Related, let $\overline{R}\coloneqq R_{\overline{\Gamma}}$ be the polynomial ring over $\KK$ with variables 
            \[
                \{T_{i,j}\mid 1\le i\le a, 1\le j \le b\}\sqcup \{T_{i_1,j_1;i_2,j_2}\mid 1\le i_1<i_2\le a,1\le j_1<j_2\le b\}.
            \]
            For notational convenience, we denote this variable set as $\{T_\ell\mid \ell\in \calL\}$, where the index $\ell\in \calL$ if and only if $T_{\ell}$ is a variable in $\overline{R}$.
        \item For $T_{i,j}\in \overline{R}$, we define $\Cell(T_{i,j})\coloneqq \{(i,j)\}\subseteq \overline{\Gamma}$, while for $T_{i_1,j_1;i_2,j_2}$, we define $\Cell(T_{i_1,j_1;i_2,j_2})\coloneqq \{(i_1,j_1),(i_1,j_2),(i_2,j_1),(i_2,j_2)\}\subseteq \overline{\Gamma}$. More generally, for each polynomial
            \[
                f=\sum_{i=1}^{r} k_i \prod_{\ell\in \calL} T_\ell^{e_{i,\ell}} \in \overline{R},
            \]
            where $k_i\in \KK\setminus\{0\}$ and $e_{i,\ell}\in \ZZ_{\ge 0}$, we define
            \[
                \Cell(f)\coloneqq \bigcup_{i=1}^{r} \bigcup_{\substack{\ell\in \calL,\\ e_{i,\ell}>0}} \Cell(T_\ell)\subseteq \overline{\Gamma},
            \]
            and call it the \emph{cell set} of $f$.
    \end{enumerate} 
\end{Definition}

Our proof strategy utilizes the combinatorial structure of the skew Ferrers diagram to establish relations among the leading monomials of the polynomials in $H$. More precisely, each monomial $x_i y_j$ corresponds to a cell in ${\Gamma}$, while each monomial $x_{i_1} p_{j_1} q_{i_2} y_{j_2}$ corresponds to two anti-diagonal cells of a rectangular region in ${\Gamma}$. To formalize this approach, we first introduce key definitions pertaining to the positional relationships between cells in $\overline{\Gamma}$.

\begin{Definition}
    \label{rmk:NE_SW}
    Let $(i,j)$ and $(i',j')$ be two cells in the diagram $\overline{\Gamma}$.
    \begin{enumerate}[a]
        \item If \begin{equation}
                i\le i' \quad \text{and} \quad j\ge j',
                \label{eqn:NE}
            \end{equation}
            then we say that $(i,j)$ is \emph{weakly northeast} of $(i',j')$. We also say that the ordered pair $((i,j),(i',j'))$ is \emph{weakly NE-compatible}, and the unordered pair $\{(i,j),(i',j')\}$ is \emph{weakly NE-SW-compatible}.  If both inequalities in \eqref{eqn:NE} are strict, then we have the \emph{strict} version of concepts. We also have similar and obvious descriptions for the other directions, which we won't need to repeat here.

        \item When the cell $(i,j)$ is {weakly northeast} of $(i',j')$, and all four cells $(i,j)$, $(i',j')$, $(i',j)$ and $(i,j')$ are contained within $\Gamma$, we define the ordered pair $((i,j),(i',j'))$ as a \emph{legitimate} pair with respect to $\Gamma$. 
    \end{enumerate} 
\end{Definition}

The legitimacy of the ordered pair is employed to distinguish $\overline{R}$ from $R$. The effective utilization of this approach is based on the following observations:

\begin{Remark}
    \label{rmk:diagram_and_variables}
    \begin{enumerate}[a]
        \item For every polynomial $f\in \overline{R}$, we have $f\in R$ if and only if $\Cell(f)\in \Gamma$.
        \item \label{rmk:diagram_and_variables_a}
            Suppose that $(i,j)$ and $(i',j')$ are two cells in $\Gamma$. Let $\overline{\Gamma}$ be the ambient diagram in \Cref{def:ambient}. Whence, $(i,j)$ and $(i',j')$ determine a rectangular region on $\overline{\Gamma}$ as follows.
            \begin{enumerate}[i]
                \item If $(i,j)$ and $(i',j')$ are weakly NW-SE-compatible, these two cells are the diagonal corners of this region.  Since $\bdlambda/\bdmu$ is a skew diagram, it can be readily verified that all cells within this region belong to $\Gamma$. 
                \item Conversely, if $(i,j)$ and $(i',j')$ are weakly NE-SW-compatible, they form the anti-diagonal corners of this region. The diagonal corners of this region are necessarily the cells $(i,j')$ and $(i',j)$. However, the cells in this region are not necessarily contained in $\Gamma$. 
            \end{enumerate} 
            \begin{figure}[tbp]
                \begin{tikzpicture}[x=2\unitlength,y=2\unitlength,inner sep=0pt]
                    \fill [pattern=crosshatch, pattern color=gray!50](40,40) rectangle (60,70);
                    \fill [pattern=crosshatch dots, pattern color=gray!50](70,50) rectangle (90,80);
                    \draw [line width=0.4mm](50,80) rectangle (60,70);
                    \draw [line width=0.4mm](60,80) rectangle (70,70);
                    \draw [line width=0.4mm](70,80) rectangle (80,70);
                    \draw [line width=0.4mm](80,80) rectangle (90,70);
                    \draw [line width=0.4mm](40,70) rectangle (50,60);
                    \draw [line width=0.4mm](50,70) rectangle (60,60);
                    \draw [line width=0.4mm](60,70) rectangle (70,60);
                    \draw [line width=0.4mm](70,70) rectangle (80,60);
                    \draw [line width=0.4mm](50,60) rectangle (60,50);
                    \draw [line width=0.4mm](60,60) rectangle (70,50);
                    \draw [line width=0.4mm](70,60) rectangle (80,50);
                    \draw [line width=0.4mm](50,50) rectangle (60,40);
                    \draw [line width=0.4mm](40,60) rectangle (50,50);
                    \draw [line width=0.4mm](30,60) rectangle (40,50);
                    \draw [line width=0.4mm](40,50) rectangle (50,40);
                    \draw [line width=0.4mm](30,40) rectangle (50,60); 
                \end{tikzpicture}
                \caption{Rectangular regions}
                \label{fig:region}
            \end{figure}
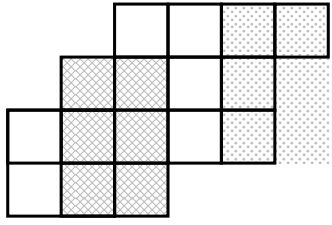

        \item \label{rmk:diagram_and_variables_c}
            The variables of $R$ can be visualized by the diagram $\Gamma$ as follows. 
            \begin{enumerate}[i]
                \item The cell $(i,j)$ in the diagram $\Gamma$ corresponds bijectively to the variable $T_{i,j}$ in $R$. 
                \item The legitimate strictly NE-compatible pair $((i,j),(i',j'))$ in $\Gamma$ corresponds bijectively to the variable $T_{i,j';i',j}$ in $R$. More precisely, this legitimate pair creates a rectangular region in $\Gamma$, in which they are anti-diagonal corners. The coordinates of the diagonal corners of this region emerge in turn as the subscripts of the $T$ variable.
            \end{enumerate} 
    \end{enumerate} 
\end{Remark}

\begin{Example}
    In the skew Ferrers diagram $\Gamma$ depicted in \Cref{fig:var}, the ordered pair $((2,3),(4,2))$ (marked with $\times$ in the figure) is a legitimate pair. The rectangular region determined by the cells $(2,3)$ and $(4,2)$ is highlighted by $
    \begin{tikzpicture}[x=1\unitlength,y=1\unitlength,inner sep=0pt]
        \filldraw [pattern=crosshatch, pattern color=gray!80](0,0) rectangle (8,8);
    \end{tikzpicture}
    $ in \Cref{fig:region}, with all constituent cells remaining entirely within $\Gamma$. It is also the rectangular region determined by the cells $(2,2)$ and $(4,3)$.

    At the same time, the ordered pair $((1,6),(3,5))$ (marked with $\triangle$ in the figure) is not legitimate. The rectangle region determined by the cells $(1,6)$ and $(3,5)$ is highlighted by
    $
    \scalebox{0.5}{
        \begin{tikzpicture}[x=2\unitlength,y=2\unitlength,inner sep=0pt]
            \filldraw [pattern=crosshatch dots, pattern color=gray!80](0,0) rectangle (8,8);
    \end{tikzpicture}}
    $ in \Cref{fig:region}. Notably, this region includes cells $(2,6)$ and $(3,6)$, which lie outside the boundary of $\Gamma$. 
\end{Example}

We plan to apply \cite[Theorem 3.12]{Sturmfels} to determine a Gr\"{o}bner basis of the defining ideal of $\KK[\LT(H)]$. To this end, we first introduce the reduction relations.

\begin{Definition}
    \label{def:GB_reduction}
    We introduce three types of quadratic reductions on the monomials in $R$ as follows.  Suppose that $(i_1,j_1)$, $(i_2,j_2)$, $(i_1',j_1')$, and $(i_2',j_2')$ are cells in $\Gamma$. Firstly, we have
    \begin{align}
        T_{i_1,j_1}T_{i_1',j_1'}&\quad \leadsto \quad T_{\min\{i_1,i_1'\},\max\{j_1,j_1'\}}T_{\max\{i_1,i_1'\},\min\{j_1,j_1'\}}, \tag{I}\label{eqn:reduction_22}\\
        \intertext{and}
        T_{i_1,j_1;i_2,j_2}T_{i_1',j_1';i_2',j_2'}&\quad\leadsto \quad 
        T_{\min\{i_1,i_1'\},\max\{j_1,j_1'\};\min\{i_2,i_2'\},\max\{j_2,j_2'\}} \notag\\
        & \qquad \qquad \qquad \times T_{\max\{i_1,i_1'\},\min\{j_1,j_1'\};\max\{i_2,i_2'\},\min\{j_2,j_2'\}}.\tag{I\!I} \label{eqn:reduction_44}
    \end{align}
    In addition, if $T_{\min\{i_1,i_1'\},j_1';i_2',\max\{j_2,j_2'\}}\in R$, then we have
    \begin{equation}
        T_{i_1',j_1';i_2',j_2'}T_{i_1,j_1}\quad\leadsto \quad
        T_{\min\{i_1,i_1'\},j_1';i_2',\max\{j_1,j_2'\} T_{\max\{i_1,i_1'\},\min\{j_1,j_2'\}}}.
        \tag{I\!I\!I$_1$}\label{eqn:reduction_24}
    \end{equation}
    Otherwise, $T_{\min\{i_1,i_1'\},j_1';i_2',\max\{j_2,j_2'\}}\notin R$, and we will have instead
    \begin{equation}
        T_{i_1',j_1';i_2',j_2'} T_{i_1,j_1}\quad\leadsto \quad
        T_{\max\{i_1,i_1'\},j_1',i_2',\min\{j_1,j_2'\}}T_{\min\{i_1,i_1'\},\max\{j_1,j_2'\}}.
        \tag{I\!I\!I$_2$}\label{eqn:reduction_42}
    \end{equation}

    Let $\calF$ be the collection of all these reductions, \emph{excluding} the trivial identical ones.
\end{Definition}

The reductions presented in \Cref{def:GB_reduction} will be employed to construct the expected Gr\"obner basis. Prior to implementing these reductions, it is essential to verify their validity and effects.

\begin{Lemma}
    [Validity of reductions]
    \label{rmk:validity}
    The three types of reductions in \Cref{def:GB_reduction} are well-defined on the set of monomials of $R$.
\end{Lemma}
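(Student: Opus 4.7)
The goal is to verify that each reduction in \Cref{def:GB_reduction} produces a product of valid variables of $R$, i.e., every output $T$-subscript satisfies the defining inequalities for membership. The central tool is the skew-Ferrers monotonicity $\mu_1 \ge \cdots \ge \mu_a$ and $\lambda_1 \ge \cdots \ge \lambda_a$, used in the form $\mu_{\min(i,i')} = \max(\mu_i, \mu_{i'})$ and $\lambda_{\max(i,i')} = \min(\lambda_i, \lambda_{i'})$.

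For Reduction~\eqref{eqn:reduction_22}, the two output cells are the remaining corners of the rectangle spanned by the two input cells; since a skew-Ferrers diagram is closed under rectangle completion (cf.\ \Cref{rmk:diagram_and_variables}), both new cells lie in $\Gamma$. I will make this explicit via index chasing: if $i_1 = \min(i_1, i_1')$, then $\mu_{i_1} < j_1 \le \max(j_1, j_1')$, and $\max(j_1, j_1') \le \lambda_{i_1}$ using $j_1' \le \lambda_{i_1'} \le \lambda_{i_1}$.

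For Reduction~\eqref{eqn:reduction_44}, the inequality $\min(i_1, i_1') < \min(i_2, i_2')$ is immediate from $i_1 < i_2$ and $i_1' < i_2'$. The bound $\mu_{\min(i_1, i_1')} < \max(j_1, j_1')$ is obtained by checking which of $i_1, i_1'$ attains the minimum: the paired $j$-index then lies inside $\max(j_1, j_1')$. The bound $\max(j_2, j_2') \le \lambda_{\min(i_2, i_2')}$ follows symmetrically, and the second output factor is handled identically after exchanging the roles of $\min$ and $\max$.

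The main obstacle is Reductions~\eqref{eqn:reduction_24} and~\eqref{eqn:reduction_42}. For~\eqref{eqn:reduction_24} the hypothesis supplies the first output variable directly, so only the cell $(\max(i_1, i_1'), \min(j_1, j_2'))$ needs checking: $\mu_{\max(i_1,i_1')} = \min(\mu_{i_1}, \mu_{i_1'})$ is bounded by $\mu_{i_1} < j_1$ and by $\mu_{i_1'} < j_1' < j_2'$, giving $\mu_{\max(i_1,i_1')} < \min(j_1, j_2')$; similarly $j_1 \le \lambda_{i_1}$ and $j_2' \le \lambda_{i_2'} \le \lambda_{i_1'}$ give $\min(j_1, j_2') \le \lambda_{\max(i_1, i_1')}$. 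The crux is~\eqref{eqn:reduction_42}: the failure of the~\eqref{eqn:reduction_24} hypothesis forces either $\mu_{i_1} \ge j_1'$ or $j_1 > \lambda_{i_2'}$. The first case yields $j_1 > \mu_{i_1} \ge j_1' > \mu_{i_1'}$, hence $i_1 < i_1' < i_2'$ and $j_1 > j_1'$; the second yields $\lambda_{i_1} \ge j_1 > \lambda_{i_2'} \ge j_2' > j_1'$, hence $i_1 < i_2'$ and $j_1 > j_1'$. Either way $\max(i_1, i_1') < i_2'$ and $j_1' < \min(j_1, j_2')$, which are precisely the inequalities required for the first output variable of~\eqref{eqn:reduction_42} to lie in $R$; the second output variable is validated by the same monotonicity argument as in~\eqref{eqn:reduction_24}.
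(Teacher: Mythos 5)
Your proof is correct and follows the same overall decomposition as the paper's: reductions \eqref{eqn:reduction_22} and \eqref{eqn:reduction_44} are validated by checking that the new subscripts are the remaining corners of rectangles spanned by cells already known to lie in $\Gamma$, reduction \eqref{eqn:reduction_24} needs only its second factor checked, and \eqref{eqn:reduction_42} is the crux. Where you diverge is in execution. The paper phrases the monotonicity of $\bdmu$ and $\bdlambda$ geometrically (a skew diagram contains the whole rectangle spanned by any NW--SE-compatible pair of its cells) and, for \eqref{eqn:reduction_42}, locates $(i_1,j_1)$ relative to the corners of the region of $T_{i_1',j_1';i_2',j_2'}$ via a nine-subcase figure before concluding that the required region $Z_4$ sits inside a region already contained in $\Gamma$. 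You instead characterize the failure of the \eqref{eqn:reduction_24} hypothesis as the explicit disjunction ``$\mu_{i_1}\ge j_1'$ or $j_1>\lambda_{i_2'}$'' and chase indices in each branch; this is more self-contained (no figure, no case explosion) and makes transparent exactly what the hypothesis buys. One small gloss: for the first output variable of \eqref{eqn:reduction_42} you establish $\max(i_1,i_1')<i_2'$ and $j_1'<\min(j_1,j_2')$ and call these ``precisely the inequalities required,'' but membership in $R$ also needs $\mu_{\max(i_1,i_1')}<j_1'$ and $\min(j_1,j_2')\le\lambda_{i_2'}$; both are immediate from $\mu_{\max(i_1,i_1')}\le\mu_{i_1'}<j_1'$ and $\min(j_1,j_2')\le j_2'\le\lambda_{i_2'}$, so nothing essential is missing.
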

\begin{proof}
    We will examine the three types of reductions individually in the following.
    \begin{enumerate}[align=left, leftmargin=*]
        \item [\eqref{eqn:reduction_22}:] 
            The rectangular region determined by $(i_1,j_1)$ and $(i_1',j_1')$ is precisely the rectangular region determined by $(\min\{i_1,i_1'\},\max\{j_1,j_1'\})$ and $(\max\{i_1,i_1'\},\min\{j_1,j_1'\})$. Furthermore, the cell $(\min\{i_1,i_1'\},\max\{j_1,j_1'\})$ is always the northeast corner of this region, while $(\max\{i_1,i_1'\},\min\{j_1,j_1'\})$ is always the southwest corner. 
                If $(i_1,j_1)$ and $(i_1',j_1')$ are NW–SE compatible, then their presence in the diagram $\Gamma$ implies, according to \Cref{rmk:diagram_and_variables}\ref{rmk:diagram_and_variables_a}, that both $(\min\{i_1,i_1'\},\max\{j_1,j_1'\})$ and $ (\max\{i_1,i_1'\},\min\{j_1,j_1'\})$ must belong to $\Gamma$.
            If instead $(i_1,j_1)$ and $(i_1',j_1')$ are NE-SW-compatible, then 
            \[
                \{(i_1,j_1),(i_1',j_1)\}=\{(\min\{i_1,i_1'\},\max\{j_1,j_1'\}), (\max\{i_1,i_1'\},\min\{j_1,j_1'\})\}.
            \]
            Consequently, this case is trivial, as no reduction is applied.
            We still have 
            \[
                (\min\{i_1,i_1'\},\max\{j_1,j_1'\}), (\max\{i_1,i_1'\},\min\{j_1,j_1'\})\in \Gamma.
            \]
            Therefore, the reduction in \eqref{eqn:reduction_22} is valid.
        \item [\eqref{eqn:reduction_44}:] In a similar vein, we can show that 
            \begin{align*}
                &(\min\{i_1,i_1'\},\max\{j_1,j_1'\}),\quad (\min\{i_2,i_2'\},\max\{j_2,j_2'\}),\\
                &\quad (\max\{i_1,i_1'\},\min\{j_1,j_1'\}),\quad (\max\{i_2,i_2'\},\min\{j_2,j_2'\})\in \Gamma.
            \end{align*}
            Furthermore, we have the following elementary fact:
            \[
                \text{if $a<b$ and $c<d$, then $\min(a,c)<\min(b,d)$ and $\max\{a,c\}<\max\{b,d\}$.}
            \]
            Since $i_1<i_2$, $j_1<j_2$, $i_1'<i_2'$, and $j_1'<j_2'$, we have consequently
            \[
                \min\{i_1,i_1'\}<\min\{i_2,i_2'\}, \quad \max\{j_1,j_1'\}<\max\{j_2,j_2'\},
            \]
            and
            \[
                \max\{i_1,i_1'\}<\max\{i_2,i_2'\}, \quad \min\{j_1,j_1'\}<\min\{j_2,j_2'\}.
            \]
            Therefore, the reduction in \eqref{eqn:reduction_44} is valid.

        \item[\eqref{eqn:reduction_24}+\eqref{eqn:reduction_42}:] 
            Since both $(i_1,j_1)$ and $(i_1',j_2')$ belong to $\Gamma$, it follows directly from the argument for the reduction in \eqref{eqn:reduction_22} that both $T_{\min\{i_1,i_1'\},\max\{j_1,j_2'\}}$ and $T_{\max\{i_1,i_1'\},\min\{j_1,j_2'\}}$ are valid variables in $R$. 

            \begin{enumerate}[a]
                \item Suppose that $T_{\min\{i_1,i_1'\},j_1';i_2',\max\{j_1,j_2'\}}\in R$. Clearly, the reduction in \eqref{eqn:reduction_24} is valid in this case.

                \item Suppose instead that $T_{\min\{i_1,i_1'\},j_1';i_2',\max\{j_1,j_2'\}}\notin R$. Notice that there is a rectangular region in $\Gamma$, which is determined by $T_{i_1',j_1';i_2',j_2'}$ as explained in \Cref{rmk:diagram_and_variables}\ref{rmk:diagram_and_variables_c}. Denote this region by $Z_1$. The northeast corner $(i_1',j_2')$ of $Z_1$ and the cell $(i_1,j_1)$ lead to another rectangular region $Z_2$. One can use the northeast corner of $Z_2$ and the southwest corner $(i_2',j_1')$ of $Z_1$ to create a rectangular region in $\overline{\Gamma}$, which will be denoted by $Z_3$. The assumption that the variable $T_{\min\{i_1,i_1'\},j_1';i_2',\max\{j_1,j_2'\}}\notin R$ amounts to saying that the region $Z_3$ is not contained in $\Gamma$. Since the northeast corner of $Z_3$ is the northeast corner of $Z_2$ and the southwest corner of $Z_3$ is the southwest corner of $Z_1$, it amounts to checking the (non)-presence of the diagonal corners of $Z_3$.

                    Depending on the relative positions of $(i_1,j_1)$ to the cells $(i_1',j_2')$ and $(i_2',j_1')$, there are $9$ subcases to scrutinize; see also \Cref{fig:validity}. Given the observation in the previous paragraph, it is straightforward to verify using \Cref{rmk:diagram_and_variables}\ref{rmk:diagram_and_variables_a} that, since $T_{\min\{i_1,i_1'\},j_1';i_2',\max\{j_1,j_2'\}}\notin R$, the cell \emph{$(i_1,j_1)$ lies strictly northeast of $(i_2',j_1')$, and is not weakly southwest of $(i_1',j_2')$}. The potential positions of $(i_1,j_1)$ are depicted in the dotted regions in \Cref{fig:validity}.
                    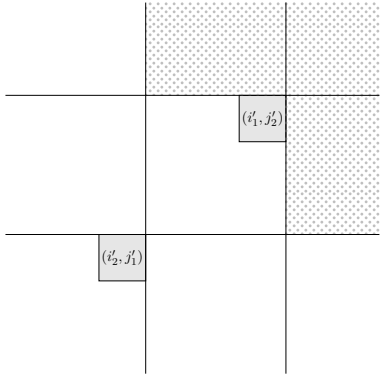
\begin{figure}[tbp]
                        \scalebox{0.5}{
                            \begin{tikzpicture}[x=3.5\unitlength,y=3.5\unitlength,inner sep=0pt]
                                \clip (10,-0) rectangle (90,80);
                                \filldraw [fill=gray!20] (60,60) rectangle (70,50);
                                \filldraw [fill=gray!20] (30,20) rectangle (40,30);
                                \fill [pattern = crosshatch dots, pattern color=gray!50] (70,30) rectangle (90,80);
                                \fill [pattern = crosshatch dots, pattern color=gray!50] (40,60) rectangle (70,80);
                                \draw (40,80) -- (40,-0);
                                \draw (70,80) -- (70,0);
                                \draw (10,60) -- (90,60);
                                \draw (10,30) -- (90,30);
                                \draw (65,55) node {$(i_1',j_2')$};
                                \draw (35,25) node {$(i_2',j_1')$};
                        \end{tikzpicture}}
                        \caption{Condition for $T_{\min\{i_1,i_1'\},j_1';i_2',\max\{j_1,j_2'\}}\notin R$}
                        \label{fig:validity}
                    \end{figure}

                    Under the last assumption on $(i_1,j_1)$, we use the southwest corner of $Z_2$ and the southwest corner of $Z_1$ to create a rectangular region $Z_4$. The assumption on $(i_1,j_1)$ will guarantee that $Z_4$, being a subregion of $Z_1$, is contained in $\Gamma$. Furthermore, the southwest corner of $Z_2$ is the northeast corner of $Z_4$, and the southwest corner of $Z_1$ is the southwest corner of $Z_4$. Whence, $T_{\max\{i_1,i_1'\},j_1',i_2',\min\{j_1,j_2'\}}$ is a valid variable in $R$, whose associated rectangular region is $Z_4$.  Therefore, the reduction in \eqref{eqn:reduction_42} is valid in this case.
                    \qedhere
            \end{enumerate} 
    \end{enumerate} 
\end{proof}

To gain a clearer understanding of the three types of reductions in \Cref{def:GB_reduction}, we elaborate on their combinatorial effects in what follows. For conciseness, we introduce the following notations.

\begin{Notation}
    \label{notn:4}
    \begin{enumerate}[a]
        \item We also denote $T_{i,j}$ as $T_{i,-\infty;+\infty,j}$, thereby ensuring that every variable in $R$ formally bears a subscript in the form of a $4$-tuple.  To maintain consistency with the homomorphism $\varphi^{*}$ introduced at the beginning of \Cref{sec:GBtoricideal}, we also denote $p_{-\infty}$ and $q_{+\infty}$ as the identity element $1$ in $S$.

        \item Given $E\coloneqq (i_1,j_1;i_2,j_2)\in (\ZZ\cup\{\pm\infty\})^4$,  let
            \[
                \mathcal{XY}_{E} \coloneqq (i_1,j_2) \qquad\text{and}\qquad \mathcal{QP}_{E} \coloneqq (i_2,j_1). 
            \]
            These definitions are established in accordance with the construction of $\varphi^{*}$.
            If $T_{E}$ is a variable in $R$, we denote these as $\mathcal{XY}_{T_E}$ and $\mathcal{QP}_{T_E}$ respectively. In particular, $\QP_{T_{i_1,j_2}}=(+\infty,-\infty)$, which lies strictly southwest of every cell on $\overline{\Gamma}$.
    \end{enumerate} 
\end{Notation}

\begin{Remark}
    [Effect of reductions]
    \label{rmk:GB_reduction}
    Using the notations in \Cref{notn:4}, for $T_{A}\coloneqq T_{i_1,j_1;i_2,j_2}$ and $T_{B}\coloneqq T_{i_1',j_1';i_2',j_2'}\in R$, the reductions \eqref{eqn:reduction_22}, \eqref{eqn:reduction_44}, and \eqref{eqn:reduction_24} take the formal form
    \[
        T_{A}T_{B} \quad \leadsto \quad T_{A'} T_{B'},
    \]
    where
    \begin{align*}
        A'\coloneqq (\min\{i_1,i_1'\},\max\{j_1,j_1'\},\min\{i_2,i_2'\},\max\{j_2,j_2'\})
        \intertext{and} 
        B'\coloneqq (\max\{i_1,i_1'\},\min\{j_1,j_1'\},\max\{i_2,i_2'\},\min\{j_2,j_2'\}).
    \end{align*}
    The following observations are immediate.
    \begin{enumerate}[i]
        \item It follows from \Cref{rmk:validity} that the four cells $\mathcal{XY}_{A'}$, $\mathcal{QP}_{A'}$, $\mathcal{XY}_{B'}$, and $\mathcal{QP}_{B'}$ all lie in $\Gamma\cup \{(+\infty,-\infty)\}$. These cells are uniquely determined by $\varphi^*(T_{A} T_{B})$.
        \item The rectangular region determined by $\XY_A$ and $\XY_B$ coincides exactly with that determined by $\XY_{A'}$ and $\XY_{B'}$. Furthermore, in the reduction \eqref{eqn:reduction_44}, the rectangular region determined by $\QP_A$ and $\QP_B$ is identical to that determined by $\QP_{A'}$ and $\QP_{B'}$.
        \item The cell $\mathcal{XY}_{A'}$ lies weakly northeast of $\mathcal{XY}_{B'}$, and the cell $\mathcal{QP}_{A'}$ lies weakly northeast of $\mathcal{QP}_{B'}$. Meanwhile, the cell $\mathcal{XY}_{A'}$ lies strictly northeast of $\mathcal{QP}_{A'}$, and the cell $\mathcal{XY}_{B'}$ lies strictly northeast of $\mathcal{QP}_{B'}$, as established by the earlier validity argument.
    \end{enumerate} 

    Regarding the remaining reduction in \eqref{eqn:reduction_42}, let us define $T_A\coloneqq T_{i_1,j_1}$, $T_B\coloneqq T_{i_1',j_1';i_2',j_2'}$, $T_{A''} \coloneqq T_{\min\{i_1,i_1'\},\max\{j_1,j_2'\}}$, and $T_{B''}\coloneqq T_{\max\{i_1,i_1'\},j_1',i_2',\min\{j_1,j_2'\}}$ within it. We present the following observations instead.
    \begin{enumerate}[i$'$]
        \item It follows from \Cref{rmk:validity} that the three cells $\mathcal{XY}_{A''}$,  $\mathcal{XY}_{B''}$, and $\mathcal{QP}_{B''}$ all lie in $\Gamma$.  These three cells are uniquely determined by $\varphi^*(T_{A} T_{B})$. Furthermore, formally, we have $\mathcal{QP}_{A''}=(+\infty,-\infty)$.
        \item The rectangular region determined by $\XY_A$ and $\XY_B$ coincides exactly with that determined by $\XY_{A''}$ and $\XY_{B''}$. 
        \item The cell $\mathcal{XY}_{A''}$ lies weakly northeast of $\mathcal{XY}_{B''}$, and the cell $\mathcal{QP}_{B''}$ is exactly $\QP_B$. Additionally, the cell $\mathcal{XY}_{B''}$ lies strictly northeast of $\mathcal{QP}_{B''}$, as established by the earlier validity argument.
    \end{enumerate}
\end{Remark}

To establish the result regarding the Gr\"obner basis, we still need the final piece of the puzzle that guarantees the convergence of the reduction process.

\begin{Definition}\label{def:chi}
    For any two cells $A,B\in \ZZ^2\cup\{(+\infty,-\infty)\}$, we define $\chi(A,B)$ as $0$ if $A$ and $B$ are weakly NE-SW-compatible, and as $1$ otherwise. 
    Notably, following the visual convention established in \Cref{exam:base_diagram}, we consider $(+\infty,-\infty)$ to be weakly NE-SW-compatible with all cells in $\ZZ^2\cup\{(+\infty,-\infty)\}$. 
\end{Definition}

The following lemma quantitatively exemplifies the effect of the reduction process through the newly defined $\chi$-function among three cells in $\Gamma$.

\begin{Lemma}
    Let $A$ and $B$ be two distinct cells in $\ZZ^2$, where $A$ is positioned strictly to the northwest of $B$. These two cells uniquely determine a rectangular region with its northeast corner designated as $A'$ and its southwest corner as $B'$. Then, for any $C\in \ZZ^2\cup\{(+\infty,-\infty)\}$, the following inequality holds: 
    \begin{equation}
        \chi(A,C)+\chi(B,C)\ge \chi(A',C)+\chi(B',C). 
        \label{eqn:compute_chi}
    \end{equation}
\end{Lemma}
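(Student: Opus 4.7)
The plan is to reduce the inequality to a compact bilinear identity in indicator variables and then read off the signs. The case $C=(+\infty,-\infty)$ is handled immediately from the convention in \Cref{def:chi}: every $\chi$-value is then zero by definition, so the inequality reads $0\ge 0$. For the rest of the argument I would restrict to $C=(i,j)\in \ZZ^2$.

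The first substantive step is to unpack $\chi$ explicitly. By \Cref{def:chi} and \Cref{rmk:NE_SW}, $\chi(X,C)=1$ precisely when $C$ is strictly NW or strictly SE of $X$, so for $X=(i_0,j_0)$ I would write
\[
    \chi(X,C)=\mathbbm{1}[i<i_0]\,\mathbbm{1}[j<j_0]+\mathbbm{1}[i>i_0]\,\mathbbm{1}[j>j_0],
\]
where the two indicator products are mutually exclusive. Setting $A=(i_1,j_1)$, $B=(i_2,j_2)$ with $i_1<i_2$ and $j_1<j_2$, so that $A'=(i_1,j_2)$ and $B'=(i_2,j_1)$, I abbreviate $\alpha_k=\mathbbm{1}[i<i_k]$, $\beta_k=\mathbbm{1}[i>i_k]$, $\gamma_k=\mathbbm{1}[j<j_k]$, $\delta_k=\mathbbm{1}[j>j_k]$ for $k=1,2$.

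Substituting these into $\chi(A,C)+\chi(B,C)-\chi(A',C)-\chi(B',C)$ and regrouping the eight resulting terms, the difference factors as
\[
    (\alpha_1-\alpha_2)(\gamma_1-\gamma_2)+(\beta_1-\beta_2)(\delta_1-\delta_2),
\]
via two applications of the algebraic identity $ac+bd-ad-bc=(a-b)(c-d)$. The hypotheses $i_1<i_2$ and $j_1<j_2$ then force $\alpha_1\le\alpha_2$ and $\gamma_1\le\gamma_2$ (so the first summand is a product of two non-positive numbers), as well as $\beta_1\ge\beta_2$ and $\delta_1\ge\delta_2$ (so the second is a product of two non-negative numbers). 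Both summands are therefore non-negative, which is exactly \eqref{eqn:compute_chi}.

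I do not anticipate any real obstacle: once the indicator encoding for $\chi$ is in place, the computation is the bilinear factorization above, followed by a one-line monotonicity observation on $\mathbbm{1}[x<t]$ as $t$ grows. The only care required is to note that the two summands in the formula for $\chi(X,C)$ are disjoint, which is clear since the strict NW and strict SE conditions on $C$ relative to $X$ cannot hold simultaneously.
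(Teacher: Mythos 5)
Your proof is correct, but it takes a genuinely different route from the paper's. The paper verifies \eqref{eqn:compute_chi} by an exhaustive positional case analysis: it partitions the plane into (roughly) $25$ subregions according to where $C$ sits relative to the four corners $A,A',B,B'$ and records, region by region in a figure, how the sum of $\chi$-values can only decrease. You instead encode $\chi(X,C)$ for $X=(i_0,j_0)$ and $C=(i,j)$ as the sum of two mutually exclusive indicator products $\mathbbm{1}[i<i_0]\mathbbm{1}[j<j_0]+\mathbbm{1}[i>i_0]\mathbbm{1}[j>j_0]$ (which correctly captures ``not weakly NE-SW-compatible,'' i.e.\ $C$ strictly NW or strictly SE of $X$), and then observe that the difference of the two sides of \eqref{eqn:compute_chi} factors as
\[
(\alpha_1-\alpha_2)(\gamma_1-\gamma_2)+(\beta_1-\beta_2)(\delta_1-\delta_2),
\]
each summand being a product of two numbers of the same sign by the monotonicity of $\mathbbm{1}[\,\cdot<t\,]$ and $\mathbbm{1}[\,\cdot>t\,]$ in $t$, given $i_1<i_2$ and $j_1<j_2$. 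I checked the bookkeeping: with $A'=(i_1,j_2)$ and $B'=(i_2,j_1)$ the eight terms regroup exactly as you claim, and the degenerate case $C=(+\infty,-\infty)$ is correctly dispatched by the convention that it is weakly NE-SW-compatible with everything. Your argument buys a fully symbolic, easily auditable verification in place of the paper's pictorial enumeration, and as a bonus it makes transparent exactly when equality fails to be strict (namely when one of the two factored products is positive); the paper's figure, by contrast, gives a more immediate geometric picture of which positions of $C$ cause the drop from $1\to 0$ or $2\to 0$.
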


\begin{proof}
    The inequality in \eqref{eqn:compute_chi} can be verified using elementary arguments as follows. In the left-hand subfigure of \Cref{fig:sum_of_chi}, the rectangular region determined by $A$ and $B$ is illustrated in gray. With respect to cell $C$, there are roughly $25$ subregions based on its relative position to the four corners. In the right-hand subfigure of \Cref{fig:sum_of_chi}, for each subregion, we mark how the sum of the $\chi$-values changes from the left-hand side to the right-hand side of Inequality \eqref{eqn:compute_chi}. In particular, we observe that this inequality holds in all cases.
    \begin{figure}[tbp]
        \scalebox{0.5}{
            \begin{tikzpicture}[x=3.5\unitlength,y=3.5\unitlength,inner sep=0pt]
                \clip (10,-0) rectangle (90,80);
                \fill[fill=gray!20] (30,60) rectangle (70,20);
                \draw (30,80) -- (30,-0);
                \draw (40,80) -- (40,-0);
                \draw (60,80) -- (60,0);
                \draw (70,80) -- (70,0);
                \draw (10,60) -- (90,60);
                \draw (10,50) -- (90,50);
                \draw (10,30) -- (90,30);
                \draw (10,20) -- (90,20);
                \draw (35,55) node {$A$};
                \draw (65,55) node {$A'$};
                \draw (35,25) node {$B'$};
                \draw (65,25) node {$B$};
        \end{tikzpicture}}
        \qquad
        \scalebox{0.5}{
            \begin{tikzpicture}[x=3.5\unitlength,y=3.5\unitlength,inner sep=0pt]
                \clip (10,-0) rectangle (90,80);
                \draw (30,80) -- (30,-0);
                \draw (40,80) -- (40,-0);
                \draw (60,80) -- (60,0);
                \draw (70,80) -- (70,0);
                \draw (10,60) -- (90,60);
                \draw (10,50) -- (90,50);
                \draw (10,30) -- (90,30);
                \draw (10,20) -- (90,20);
                \draw (20,70) node {$2\to 2$};
                \draw (20,55) node {$1\to 1$};
                \draw (20,40) node {$1\to 1$};
                \draw (20,25) node {$0\to 0$};
                \draw (20,10) node {$0\to 0$};
                \draw (80,10) node {$2\to 2$};
                \draw (80,40) node {$1\to 1$};
                \draw (80,25) node {$1\to 1$};
                \draw (80,55) node {$0\to 0$};
                \draw (80,70) node {$0\to 0$};
                \draw (50,10) node {$1\to 1$};
                \draw (50,70) node {$1\to 1$};
                \draw (50,55) node {$1\to 0$};
                \draw (35,55) node {$1\to 0$};
                \draw (35,40) node {$1\to 0$};
                \draw (65,40) node {$1\to 0$};
                \draw (50,25) node {$1\to 0$};
                \draw (65,10) node {$1\to 1$};
                \draw (35,70) node {$1\to 1$};
                \draw (35,10) node {$0\to 0$};
                \draw (65,55) node {$0\to 0$};
                \draw (65,25) node {$1\to 0$};
                \draw (35,25) node {$0\to 0$};
                \draw (65,70) node {$0\to 0$};
                \draw (50,40) node {$2\to 0$};
        \end{tikzpicture}}
        \caption{Sum of $\chi$-values}
        \label{fig:sum_of_chi}
    \end{figure}
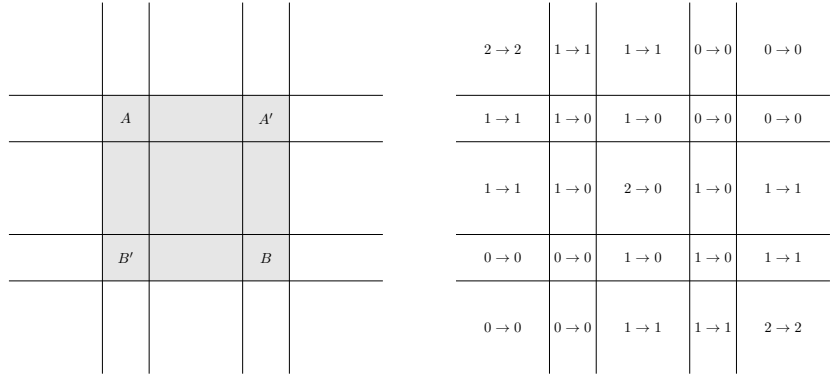
\end{proof}

It is now appropriate to establish the convergence of the reduction process.

\begin{Proposition}
    \label{rmk:StandardMonomial}
    The reduction relation module the $\calF$ in \Cref{def:GB_reduction} is Noetherian, i.e., every sequence of reductions modulo $\calF$ terminates. 
\end{Proposition}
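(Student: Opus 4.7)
The plan is to construct a non-negative integer-valued invariant $\Sigma$ on monomials of $R$ that strictly decreases under every reduction in $\calF$. Since $\mathbb{N}$ is well-ordered, this immediately yields the Noetherian property of the reduction relation.

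Concretely, to each monomial $m = T_{E_1} \cdots T_{E_n}$ in $R$, I would associate the multiset of cells
\[
    \calC(m) \coloneqq \{\XY_{E_s}, \QP_{E_s} \mid 1 \le s \le n\},
\]
with the understanding that the sentinel $(+\infty,-\infty)$ arising as $\QP_{T_{i,j}}$ for a $2$-subscript variable is weakly NE-SW compatible with every cell and hence contributes $0$ to every $\chi$-value. I would then set
\[
    \Sigma(m) \coloneqq \sum_{\{C_1, C_2\}} \chi(C_1, C_2),
\]
where the sum runs over unordered pairs of distinct elements of $\calC(m)$. Since $\chi \in \{0,1\}$, clearly $\Sigma(m) \in \mathbb{N}$.

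The core step is to verify that any reduction $T_A T_B \leadsto T_{A^{\ast}} T_{B^{\ast}}$ in $\calF$ strictly decreases $\Sigma$. The unifying combinatorial picture, made explicit in \Cref{rmk:GB_reduction}, is that the multiset $\{\XY_A, \XY_B\}$ gets replaced by $\{\XY_{A^{\ast}}, \XY_{B^{\ast}}\}$, which are precisely the NE and SW corners of the rectangular region determined by $\XY_A$ and $\XY_B$; in reduction (II), the pair $\{\QP_A, \QP_B\}$ is transformed analogously, while in reductions (III$_1$) and (III$_2$), $\QP_A$ is preserved and $\QP_B$ is a sentinel. For each such internal pair, the $\chi$-contribution drops from $1$ to $0$ precisely when the pair is strictly NW-SE compatible; for any external cell $C \in \calC(m)$ coming from an unchanged factor, Inequality~\eqref{eqn:compute_chi} gives $\chi(\XY_A, C) + \chi(\XY_B, C) \ge \chi(\XY_{A^{\ast}}, C) + \chi(\XY_{B^{\ast}}, C)$ (and likewise on the $\QP$-side), so the corresponding contributions are non-increasing. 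Since any reduction in $\calF$ is non-trivial by construction, at least one of $\{\XY_A, \XY_B\}$ or $\{\QP_A, \QP_B\}$ must be strictly NW-SE, providing the requisite strict decrease.

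The main obstacle I anticipate is controlling the cross terms in reduction (II), such as $\chi(\XY_A, \QP_B)$, where both the $\XY$-pair and the $\QP$-pair are transformed simultaneously. I would resolve this through a two-pass argument: first replace $\{\XY_A, \XY_B\}$ by $\{\XY_{A^{\ast}}, \XY_{B^{\ast}}\}$ while holding $\QP_A, \QP_B$ fixed, then carry out the analogous replacement for $\{\QP_A, \QP_B\}$ with the new $\XY$-cells now playing the role of external witnesses. Applying Inequality~\eqref{eqn:compute_chi} at each pass shows that every cross contribution is non-increasing, so the strict drop from the dominant NW-SE pair survives. The Noetherian property of $\calF$ then follows from the well-ordering of $\mathbb{N}$.
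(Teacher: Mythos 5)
Your invariant $\Sigma$ is non-increasing under every reduction, and your two-pass use of Inequality~\eqref{eqn:compute_chi} to control the cross terms is fine; this part mirrors the paper's argument, which uses the closely related weight $\chi(g)=\sum_{i<j}\bigl(\chi(\XY_{T_{A_i}},\XY_{T_{A_j}})+\chi(\QP_{T_{A_i}},\QP_{T_{A_j}})\bigr)$. The gap is in the final step: your claim that every non-trivial reduction makes at least one of the pairs $\{\XY_A,\XY_B\}$ or $\{\QP_A,\QP_B\}$ strictly NW--SE, and hence forces a strict drop, is false. A reduction can be non-trivial while leaving the entire cell multiset $\calC(m)$ unchanged, because it may merely re-associate which $\QP$-corner is attached to which $\XY$-corner. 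Concretely, take $\Gamma$ the full $4\times 4$ rectangle and apply \eqref{eqn:reduction_44} to $T_{1,1;4,4}\,T_{2,2;3,3}$: here $\XY$-cells $(1,4),(2,3)$ are weakly NE--SW compatible and so are the $\QP$-cells $(4,1),(3,2)$, yet the rule produces the genuinely different monomial $T_{1,2;3,4}\,T_{2,1;4,3}$, whose cell multiset $\{(1,4),(2,3),(3,2),(4,1)\}$ is identical to the original. Thus $\Sigma$ is unchanged and the well-ordering of $\NN$ gives you nothing. The same phenomenon occurs for \eqref{eqn:reduction_24}, e.g.\ $T_{2,1;3,2}\,T_{1,4}\leadsto T_{1,1;3,4}\,T_{2,2}$, where again the cell multiset is preserved.

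This is precisely why the paper does not rely on $\chi$ alone: it refines the (non-total, non-strictly-decreasing) coarse order $\chi$ by a genuine term order $\tau$ on $R$ (a graded reverse lexicographic order built from an explicit ordering of the $T$-variables), and verifies that each reduction strictly decreases the product order $\chi\#\tau$, after which the termination criterion of \cite[Theorem 3.12]{Sturmfels} applies. To repair your proof you would need to supply such a tie-breaker for the reductions that fix $\calC(m)$, and check that those ``re-association'' moves cannot cycle; the non-negativity of $\Sigma$ by itself does not suffice.
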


\begin{proof}
    First, we introduce a coarse order on the set of monomials in $R$ as follows. Given a monomial $g=T_{A_1}\cdots T_{A_k}$ of degree $k$ in $R$, we define
    \[
        \chi(g)\coloneqq \sum_{1\le i<j\le k} (\chi(\mathcal{XY}_{T_{A_i}},\mathcal{XY}_{T_{A_j}})+\chi(\mathcal{QP}_{T_{A_i}},\mathcal{QP}_{T_{A_j}})),
    \]
    using the function $\chi$ defined in \Cref{def:chi}.   If applying any of the four reductions in \Cref{def:GB_reduction} to $g$ yields a new monomial $g'$, we claim that $\chi(g)\ge \chi(g')$. For $k=2$, this claim holds by virtue of the observations in items (iii) and (iii)$'$ of \Cref{rmk:GB_reduction}. For $k\ge 3$, it suffices to additionally use the inequality in \eqref{eqn:compute_chi}. 

    The coarse order defined by $\chi$ shall also be denoted by $\chi$, by abuse of notation. It is far from being a 
    total
    order. Therefore, we need to refine it to achieve this end. Given two variables $T_{i_1,j_1;i_2,j_2}$ and $T_{i_1',j_1';i_2',j_2'}$ in $R$ (following the convention in \Cref{rmk:GB_reduction}), we say that $T_{i_1,j_1;i_2,j_2}>T_{i_1',j_1';i_2',j_2'}$ if the first nonzero component of $(i_1-i_1',i_2-i_2',j_1-j_1',j_2-j_2')$ 
    is negative.  We have a graded reverse lexicographic order $\tau$ on $R$ with respect to this ordering of variables.

    Let $\chi\#\tau$ be the product order of $\chi$ and the graded reverse lexicographic order $\tau$. For each reduction in \Cref{def:GB_reduction}, it is straightforward to verify that the quadratic monomial before the reduction is bigger with respect to $\chi\#\tau$ than that after the reduction. Since the order $\tau$ is a total order on the monomials of $R$ and
    the values of $\chi$ are non-negative, it follows directly from \cite[Theorem 3.12]{Sturmfels} that the reduction relation modulo $\calF$ is Noetherian.
\end{proof}

We now formally introduce the binomial relations that constitute a quadratic Gr\"{o}bner basis for $\ker (\varphi^\ast)$, which is the defining ideal of the algebra $\KK[\LT(H)]$.

\begin{Definition}
    \begin{enumerate}[a]
        \item The following binomials correspond to the reductions in \Cref{def:GB_reduction}:
            \begin{align}
                T_{i_1,j_1}T_{i_1',j_1'} &- T_{\min\{i_1,i_1'\},\max\{j_1,j_1'\}}T_{\max\{i_1,i_1'\},\min\{j_1,j_1'\}}, \tag{I}\\
                T_{i_1,j_1;i_2,j_2}T_{i_1',j_1';i_2',j_2'}&-
                T_{\min\{i_1,i_1'\},\max\{j_1,j_1'\};\min\{i_2,i_2'\},\max\{j_2,j_2'\}} \notag\\
                & \qquad \qquad \qquad \times T_{\max\{i_1,i_1'\},\min\{j_1,j_1'\};\max\{i_2,i_2'\},\min\{j_2,j_2'\}},\tag{I\!I}  \\
                T_{i_1',j_1';i_2',j_2'}T_{i_1,j_1}&-
                T_{\min\{i_1,i_1'\},j_1';i_2',\max\{j_1,j_2'\}} T_{\max\{i_1,i_1'\},\min\{j_1,j_2'\}}, \tag{I\!I\!I$_1$}\\
                T_{i_1',j_1';i_2',j_2'}T_{i_1,j_1}&-
                T_{\max\{i_1,i_1'\},j_1',i_2',\min\{j_1,j_2'\}} T_{\min\{i_1,i_1'\},\max\{j_1,j_2'\}}. \tag{I\!I\!I$_2$}
            \end{align}

            By abuse of notation, we also let $\calF$ be the collection of all these binomials, \emph{excluding} the trivial ones. 
        \item According to \cite[Theorem 3.12]{Sturmfels} and \Cref{rmk:StandardMonomial}, the binomials in $\calF$ form a Gr\"obner basis for the ideal they generate with respect to a suitable monomial ordering on $R$. Consequently, any monomial $g$ in $R$ can be reduced by $\calF$ to its normal form. Given that the polynomials in $\calF$ are binomial, this normal form remains a monomial, known as a \emph{standard monomial}. We denote this monomial by $\std(g)$.
        \item Let $\calA$ be an assignment for the monomials in $R$ (such as assignments to suitable monomial, integer, multiset, or other mathematical objects). The assignment $\calA$ is called \emph{fiber-invariant} (with respect to $\varphi^*$) if for any monomials $g$ and $\widetilde{g}$ in $R$ satisfying $\varphi^*(g)=\varphi^*(\widetilde{g})$, the condition $\calA(g)=\calA(\widetilde{g})$ holds. By abuse of notation, we also refer to $\calA(g)$ as being fiber-invariant  when this condition is satisfied.
    \end{enumerate} 
\end{Definition}

Hereafter, we will adopt the notation and conventions established in \Cref{notn:4}. Furthermore, for any given quadruple $E\coloneqq (i_1,j_1;i_2,j_2)\in (\ZZ\cup\{\pm\infty\})^4$, we define 
\[
    \mathcal{X}_{E} \coloneqq i_1, \quad 
    \mathcal{Y}_{E} \coloneqq j_2, \quad 
    \mathcal{Q}_{E} \coloneqq i_2, \quad \text{and}\quad
    \mathcal{P}_{E} \coloneqq j_1. 
\]
These definitions are introduced in accordance with the construction of $\varphi^{*}$.

\begin{Example}
    Suppose that $g$ is a monomial in $R$ and takes the form $T_{A_1}\cdots T_{A_k}$.
    \begin{enumerate}[a]
        \item By the definition of $\varphi^*$, the multisets $\{\calX_{A_1},\dots,\calX_{A_k}\}$, $\{\calY_{A_1},\dots,\calY_{A_k}\}$, $\{\calQ_{A_1},\dots,\calQ_{A_k}\}$, and $\{\calP_{A_1},\dots,\calP_{A_k}\}$ are fiber-invariant. The degree $k$ is also fiber-invariant.
        \item Suppose that $\std(g)$ takes the form $T_{A_1'}\cdots T_{A_k'}$. We assert that \emph{the multiset $\{\XY_{A_1'},\dots,\XY_{A_k'}\}$ is fiber-invariant}. To demonstrate this, after reordering elements within the multisets, we may assume that the multiset $\{\calX_{A_1},\dots,\calX_{A_k}\}$ corresponds to $\{{i_1}\le \cdots \le {i_k}\}$, while the multiset $\{\calY_{A_1},\dots,\calY_{A_k}\}$ corresponds to $\{{j_1}\ge \cdots\ge {j_k}\}$. It is evident that the cells in the multiset $\{({i_1},{j_1}),\dots,({i_k},{j_k})\}$ are mutually NE-SW-compatible. As established in \Cref{rmk:GB_reduction}, the cells in the multiset $\{\XY_{A_1'},\dots,\XY_{A_k'}\}$ are also mutually NE-SW-compatible. Since both $\{\calX_{A_1},\dots,\calX_{A_k}\}$ and $\{\calY_{A_1},\dots,\calY_{A_k}\}$ are fiber-invariant multisets, the multiset $\{({i_1},{j_1}),\dots,({i_k},{j_k})\}$ necessarily coincides with $\{\XY_{A_1'},\dots,\XY_{A_k'}\}$. This observation substantiates the fiber-invariance property of $\{\XY_{A_1'},\dots,\XY_{A_k'}\}$. By analogous reasoning, the multiset $\{\QP_{A_1'},\dots,\QP_{A_k'}\}$ is also fiber-invariant.
    \end{enumerate} 
\end{Example}

We are now prepared to present the main result of this section, which establishes a quadratic Gr\"obner basis for the $\KK$-algebra $\KK[\LT(H)]$.

\begin{Theorem}
    \label{thm:varphi_star_GB}
    The set $\calF$ forms a quadratic Gr\"obner basis for $\ker(\varphi^*)$ under suitable monomial order on $R$. 
\end{Theorem}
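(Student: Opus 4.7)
The plan is to apply \cite[Theorem~3.12]{Sturmfels}: a set of binomials in the kernel of a monomial map forms a Gr\"obner basis with respect to a compatible term order iff the associated rewriting rule is Noetherian and confluent, which for binomial rules is equivalent to every fiber of the presentation map containing a unique standard monomial. Noetherianity is established in \Cref{rmk:StandardMonomial}. For the containment $\calF\subseteq \ker(\varphi^*)$, a direct inspection of the four reductions in \Cref{def:GB_reduction} shows that the two monomials on either side of each binomial carry the same multisets $\{\calX\}, \{\calY\}, \{\calQ\}, \{\calP\}$ of coordinates (using the convention $p_{-\infty}=q_{+\infty}=1$ of \Cref{notn:4}), hence the same image under $\varphi^*$; since every binomial in $\calF$ is quadratic by construction, the resulting Gr\"obner basis will be quadratic.

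The heart of the argument is the uniqueness of the standard monomial within each fiber. From the Example preceding the theorem, writing $\std(g) = T_{A_1'}\cdots T_{A_k'}$, the multisets $\{\XY_{A_i'}\}$ and $\{\QP_{A_i'}\}$ are already fiber-invariant; so are the degree $k$ and, by counting $p$'s and $q$'s in $\varphi^*(g)$, the numbers of $4$-tuple-type and degenerate-type variables. I would then sort the $\XY$-cells of $\std(g)$ in a canonical NE-to-SW order, and likewise the $\QP$-cells, placing the formal symbol $(+\infty,-\infty)$ (which lies SW of every cell by convention) at the end. The ``no reduction applies'' characterization of the standard form forces the pairing between $\XY$- and $\QP$-cells to match these sortings position by position: the triviality of reductions (I) and (II) yields the NE-SW compatibility of the $\XY$- and $\QP$-sequences, while the triviality of (III$_1$) or (III$_2$) at each mixed pair fixes the relative placement of degenerate variables among the $4$-tuple ones.

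The main obstacle is handling the mixed reductions (III$_1$) and (III$_2$). Unlike (I) and (II), which simply test NE-SW compatibility, the split between (III$_1$) and (III$_2$) depends on whether the auxiliary variable $T_{\min\{i_1,i_1'\},j_1';i_2',\max\{j_1,j_2'\}}$ lies in $R$, a condition sensitive to the geometry of the skew diagram $\Gamma$. The non-trivial form of (III$_1$) can be read as ``promoting'' a degenerate variable into a $4$-tuple whenever $\Gamma$ permits, so standard forms are precisely those in which no such promotion remains possible. I would verify that, given the fiber-invariant cell multisets and the $4$-tuple count, this constraint uniquely pins down which sorted $\XY$-cells pair with the formal $(+\infty,-\infty)$-entries and which with real $\QP$-cells, thereby completing the uniqueness of $\std(g)$ and yielding the Gr\"obner basis conclusion via Sturmfels' criterion.
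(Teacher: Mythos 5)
Your overall strategy coincides with the paper's: invoke Sturmfels' criterion, get Noetherianity from \Cref{rmk:StandardMonomial}, check $\calF\subseteq\ker(\varphi^*)$ by coordinate bookkeeping, and reduce everything to the fiber-invariance of $\std(g)$, observing that the multisets $\{\XY_{A_i'}\}$ and $\{\QP_{A_i'}\}$, the degree, and the number of degenerate variables are already fiber-invariant, so that the only remaining freedom is \emph{which positions} of the NE-to-SW-sorted $\XY$-sequence carry the degenerate variables.

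The problem is that the sentence ``I would verify that \dots this constraint uniquely pins down which sorted $\XY$-cells pair with the formal $(+\infty,-\infty)$-entries'' is precisely the theorem's only hard step, and you give no argument for it. It is not a routine check: the non-legitimacy condition coming from the (III$_1$)/(III$_2$) dichotomy depends on the geometry of the skew diagram $\Gamma$ and does not by itself visibly exclude two distinct admissible placements of the degenerate variables compatible with the sorted $\XY$- and $\QP$-multisets. The paper closes this by a minimal-counterexample argument: take two distinct standard monomials $g=T_{A_1}\cdots T_{A_k}$ and $\widetilde g=T_{\widetilde A_1}\cdots T_{\widetilde A_k}$ in the same fiber with $k$ minimal; since factors of standard monomials are standard, minimality forces the degenerate-position sets $\calV(g)$ and $\calV(\widetilde g)$ to be disjoint, and then, after matching the common prefix $T_{A_i}=T_{\widetilde A_i}$ for $i<\min(\calV(g))$ via the sorted $\QP$-multiset, forces $\min(\calV(g))=1$; finally the non-legitimacy of $(\XY_{A_1},\QP_{A_{\ell'}})$ with $\ell'=\min(\calU(g))$ contradicts the legitimacy of the genuine variable $T_{\widetilde A_1}$, which realizes the same pair of cells. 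Without this (or an equivalent) argument, your proof is incomplete at its decisive point.
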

\begin{proof}
    Let $J$ be the ideal in $R$, which is generated by $\calF$.  It is obvious that $J$ is a subideal of $\ker(\varphi^*)$. Furthermore, by \Cref{rmk:StandardMonomial}, these binomials form a Gr\"obner basis for $J$ under a suitable monomial order on $R$. It remains to show that $J$ coincides with $\ker(\varphi^*)$.

    Given that the defining ideal $\ker(\varphi^*)$ is binomial, it is generated by the binomials of the form $g-\widetilde{g}$, where $g$ is a monomial in $R$ and $\widetilde{g}$ is a distinct monomial in the fiber of $g$.
    It should be noted that every monomial $g$ in $R$ can be reduced to the standard monomial $\std(g)$
    through the reduction process by $\calF$. 
    When $\std(g)$ exhibits fiber-invariance, $\ker(\varphi^*)$ is generated by binomials of the form $g-\std(g)$, all of which belong to the subideal $J$. Consequently, the defining ideal $\ker(\varphi^*)$ coincides precisely with $J$. Based on this observation, it suffices to show that $\std(g)$ is fiber-invariant.

    We prove this by contradiction. Suppose that $g$ and $\widetilde{g}$ are two monomials of degree $k$ in the same fiber with $\std(g)\ne \std(\widetilde{g})$. Let $k$ be the smallest integer for which this occurs among the monomials in $R$. It is obvious that $k\ge 2$. Furthermore, without loss of generality, we can assume that $g=\std(g)$ and $\widetilde{g}=\std(\widetilde{g})$.

    Suppose that $g=\std(g)$ takes the form $T_{A_1}\cdots T_{A_k}$. The cells in $\{\XY_{A_1},\dots,\XY_{A_k}\}$ are mutually NE-SW-compatible and belong to $\Gamma$. Without loss of generality, for $1\le i<j\le k$, we may assume that $\XY_{A_i}$ lies weakly northeast of $\XY_{A_j}$. Furthermore, if $\XY_{A_i}= \XY_{A_j}$, we additionally assume that $\QP_{A_i}$ lies weakly northeast of $\QP_{A_j}$. Note that the cells in $\{\QP_{A_1},\dots,\QP_{A_k}\}$ belong to $\Gamma\cup \{(+\infty,-\infty)\}$. Whence, for $1\le i<j\le k$, if both $\QP_{A_i}$ and $\QP_{A_j}$ belong to $\Gamma$, we claim that $\QP_{A_i}$ lies weakly northeast of $\QP_{A_j}$. To verify this claim, it suffices to assume that $\XY_{A_i} \ne \XY_{A_j}$. Since $g=\std(g)$ is standard with respect to $J$, no nontrivial reduction of type (I\!I) can be applied further. Therefore, the claim holds.

    Denote the set $\{i\mid \QP_{A_i}=(+\infty,-\infty)\}=\{v_1<\cdots <v_t\}$ by $\calV=\calV(g)$ and the set $[k]\setminus\calV=\{u_1<\cdots<u_s\}$ by $\calU=\calU(g)$. The two cardinalities $s=\#\calU$ and $t=\#\calV$ are fiber-invariant, since $t$ is also the cardinality of $\{i\mid \mathcal{Q}_{A_i}=+\infty\}$ and $s=k-t$. Let us review the requirements that $\calU$ and $\calV$ must satisfy:
    \begin{enumerate}
        \item [\textup{(R$_1$)}] For all $i,j$ with $i<j$, the cell $\XY_{A_i}$ lies weakly northeast of the cell $\XY_{A_j}$ by the discussion above.
        \item [\textup{(R$_2$)}] For any $i\in \calV$ and $j\in \calU$ with $i<j$, the ordered pair $(\XY_{A_i},\QP_{A_j})$ is not legitimate with respect to $\Gamma$ in the sense of \Cref{rmk:NE_SW}. This is because no nontrivial reduction of type (I\!I\!I$_1$) can be applied to $\std(g)$.
        \item [\textup{(R$_3$)}] For all $i,j\in \calU$ with $i<j$, the cell $\QP_{A_i}$ lies weakly northeast of the cell $\QP_{A_j}$ by the discussion above.
        \item [\textup{(R$_4$)}] For every $i\in\calU$, the cell $\QP_{A_i}$ lies strictly southwest of the cell $\XY_{A_i}$ by the discussion in \Cref{rmk:diagram_and_variables} \ref{rmk:diagram_and_variables_c}.
    \end{enumerate}
    Since the multisets $\{\XY_{A_1},\dots,\XY_{A_k}\}$ and $\{\QP_{A_1},\dots,\QP_{A_k}\}$ are fiber-invariant, the set $\calU$ (and equivalently $\calV$) uniquely determines the monomial $\std(g)$. It remains to show that $\calV$ is fiber-invariant. 

    Furthermore, we can assume that the requirements (R$_1$)-(R$_4$) are also satisfied by the corresponding objects obtained from $\widetilde{g}=\std(\widetilde{g})=T_{\widetilde{A}_1}\cdots T_{\widetilde{A}_k}$. In particular, let $\widetilde{\calU} \coloneqq \calU(\widetilde{g})
    $ and $\widetilde{\calV}\coloneqq\calV(\widetilde{g})
    $ be the corresponding sets obtained from $\widetilde{g}$. We have pointed out that $\#\calU=\#\widetilde{\calU}$ and $\#\calV=\#\widetilde{\calV}$ by the fiber-invariance. Since $g=\std(g)\ne \widetilde{g}=\std(\widetilde{g})$, it follows that $\calV\ne \widetilde{\calV}$ and $\calU\ne \widetilde{\calU}$. Note that factors of standard monomials are still standard. By the minimality assumption of $k$, the subsets $\calV$ and $\widetilde{\calV}$ are therefore disjoint. Let $\ell\coloneqq \min(\calV)$, and suppose without loss of generality that $\ell<\widetilde{\ell}\coloneqq \min (\widetilde{\calV})$.

    Since the multisets $\{\XY_{A_1},\dots,\XY_{A_k}\}$ is fiber-invariant, $\XY_{A_i}=\XY_{\widetilde{A}_i}$ for all $i$ by requirement (R$_1$). Since $\{\QP_{A_1},\dots,\QP_{A_k}\}$ is fiber-invariant, it follows from requirement (R$_3$) that $\QP_{A_i}=\QP_{\widetilde{A}_i}$ for $i<\ell$. Consequently, $T_{A_i}=T_{\widetilde{A}_i}$ for $i<\ell$. By the minimality assumption on $k$, we must therefore have $\ell=1$.

    Since $\calV$ and $\widetilde{\calV}$ are disjoint, it is evident that $\widetilde{\ell}\in \calU$. Let $\ell'\coloneqq\min(\calU)$. It is clear that $\ell=1<\ell'\le \widetilde{\ell}$. By requirement (R$_2$), the ordered pair $(\XY_{A_{1}},\QP_{A_{\ell'}})$ is not legitimate with respect to $\Gamma$. On the other hand, since $1=\min(\widetilde{\calU})$ and $\ell'=\min(\calU)$, by requirement (R$_3$), we must have $\QP_{\widetilde{A}_1}=\QP_{A_{\ell'}}$.  In particular, the ordered pair $(\XY_{\widetilde{A}_1},\QP_{\widetilde{A}_1})=(\XY_{A_1},\QP_{A_{\ell'}})$ is legitimate with respect to $\Gamma$. 
    This yields the desired contradiction.
\end{proof}

\section{\textsc{Sagbi} basis}\label{sec:Sagbi}
To establish our main theorem, it remains to show that the Gr\"ober basis presented in \Cref{thm:varphi_star_GB} can be ``lifted'' to form a Gr\"obner basis for the kernel of $\varphi$, as illustrated in \cite[Corollary 2.1, 2.2]{Sagbi}.

In the special case where $\Gamma=\overline{\Gamma}$, this has been thoroughly addressed by Higashitani \cite{arXiv:2411.07812}. Based on the relative comparison of the subscripts, he identified $13$ types of binomials in the kernel of $\varphi^*$ to verify the lifting.

In the more general case, $\Gamma$ is a subdiagram of $\overline{\Gamma}$. It is natural to hope that a canonical restriction of his lifting might work in this case. However, this is not true in general. 
When starting with a quadratic binomial whose set of cells is contained in $\Gamma$, Higashitani's lifting might still involve cells in $\overline{\Gamma}\setminus \Gamma$. Furthermore, we have the extra binomial, corresponding to the reduction in \eqref{eqn:reduction_42}, that awaits lifting.

\subsection{The $13$ cases and the lifting}
\label{subsection:lifting}
In what follows, we will scrutinize the $13$ cases of lifted polynomials examined by Higashitani. We will check whether Higashitani's lifting method still works in each cases. If not, we will provide remedies.
To facilitate the argument, we introduce some concepts.

\begin{Definition}
    Assume that $\calZ$ is a nonempty subset of $\Gamma$. Let $\cl(\calZ)$ be the minimal subset of $\overline{\Gamma}$ such that the following is satisfied:
    \begin{enumerate}[a]
        \item the set $\calZ$ is a subset of $\cl(\calZ)$;
        \item for each NW-SE-compatible pair $(A,B)$ with $A,B\in \cl(\calZ)$, the cells in the rectangular region determined by $A$ and $B$ also belong to $\cl(\calZ)$.
    \end{enumerate} 
    It follows from the discussion in \Cref{rmk:diagram_and_variables}\ref{rmk:diagram_and_variables_a} that $\cl(\calZ)$ is a subset of $\Gamma$. We will refer to $\cl(\calZ)$ as the \emph{closure} of $\calZ$ in $\Gamma$.
\end{Definition}

\begin{Example}
    Let us consider two quadratic monomials $T_{1,1;3,2}T_{2,3;4,4}$ and $T_{2,6;3,9}T_{1,7;4,8}$, which correspond to subsequent polynomials $F_5$ and $F_{12}$ respectively. The cell set of $T_{1,1;3,2}T_{2,3;4,4}$ is denoted by $\times$ in \Cref{fig:cloure}, while its closure is represented by dotted markers. On the other hand, the cell set of $T_{2,6;3,9}T_{1,7;4,8}$ is indicated by $\triangle$, with its closure illustrated by gray shading.
    \begin{figure}[tbhp]
        \begin{tikzpicture}[x=2\unitlength,y=2\unitlength,inner sep=0pt]                   
            \fill [pattern = crosshatch dots, pattern color=gray!50] (0,0) rectangle (40,40);
            \fill[fill=gray!20] (50,0) rectangle (80,30);
            \fill[fill=gray!20] (60,10) rectangle (90,40);
            \foreach \x in {0,1,2,3,4} \draw (0,\x*10) -- (90,\x*10); 
            \foreach \x in {1,2,3,4} 
            \draw [color=gray] (-5,45-\x*10) node {$\x$};
            \foreach \x in {0,...,9} \draw (\x*10,0) -- (\x*10,40); 
            \foreach \x in {1,...,9} \draw [color=gray] (\x*10-5,45) node {$\x$}; 
            \draw (5,15) node {$\times$};
            \draw (5,35) node {$\times$};
            \draw (15,35) node {$\times$};
            \draw (15,15) node {$\times$};
            \draw (25,5) node {$\times$};
            \draw (25,25) node {$\times$};
            \draw (35,25) node {$\times$};
            \draw (35,5) node {$\times$};

            \draw (55,15) node {$\triangle$};
            \draw (55,25) node {$\triangle$};
            \draw (65,35) node {$\triangle$};
            \draw (65,5) node {$\triangle$};
            \draw (75,5) node {$\triangle$};
            \draw (75,35) node {$\triangle$};
            \draw (85,25) node {$\triangle$};
            \draw (85,15) node {$\triangle$};
        \end{tikzpicture}
        \caption{Closures of two sets of cells}
        \label{fig:cloure}
    \end{figure}
\end{Example}

Throughout this discussion, we consistently assume the following index ranges: $1 \le i,i',e,e' \le a$ and $1 \le j,j',f,f' \le b$, with the constraints $i<i'$, $e<e'$, $j'<j$ and $f'<f$. 
Moreover, in each specified polynomial $F_*$, the first monomial has a set of cells contained in $\Gamma$.

\begin{enumerate}[i]
    \item $F_1\coloneqq \uline{T_{i,j}T_{e,f}-T_{i,f}T_{e,j}}-T_{i,j;e,f}$ with $i<e$ and $j<f$. 

        The marked binomial is of type \eqref{eqn:reduction_22}. It is trivially lifted to the polynomial $F_1$ in $R$. 
    \item $F_2\coloneqq\uline{T_{i,j}T_{e,f';e',f}-T_{e,f}T_{i,f';e',j}}+T_{i,f'}T_{e,f;e',j}+T_{e',j}T_{i,f';e,f}+T_{e',f'}T_{i,f;e,j}$ with $i<e$ and $f<j$. 

        If $T_{i,f';e',j}\in R$, the marked binomial is of type \eqref{eqn:reduction_24}. The rectangular region determined by $(i,f')$ and $(e',j)$ is contained in $\Gamma$. Since the cell set of $F_2$ is contained in this region, $F_2$ is an element of $R$. Consequently, the stated lifting still holds.

        If, instead, $T_{i,f';e',j}\notin R$, the monomial $T_{i,j}T_{e,f';e',f}$ is already in standard form under our reduction. There is nothing to concern in this case.

    \item 
        $F_3\coloneqq \uline{T_{i,j}T_{e,f';e',f}-T_{i,f}T_{e,f';e',j}}+T_{i,f'}T_{e,f;e',j}$ with $e \le i$ and $f<j$. 

        If $i\ge e'$, the marked binomial is of type \eqref{eqn:reduction_24}. The rectangular region determined by $(e,f')$ and $(i,j)$ is contained in $\Gamma$. Since the cell set of $F_3$ is contained in this region, $F_3$ belongs to $R$. Consequently, the stated lifting still holds.

        If $i<e'$ and $T_{e,f';e',j}\in R$, the marked binomial is of type \eqref{eqn:reduction_24}. The rectangular region determined by $(e,f')$ and $(e',j)$ is contained in $\Gamma$. Since the cell set of $F_3$ is contained in this region, $F_3$ belongs to $R$. Consequently, the stated lifting still holds.

        If $i<e'$ and $T_{e,f';e',j}\notin R$, we will use the reduction \eqref{eqn:reduction_42}. Whence, we turn to 
        \begin{align*}
            F_3'\coloneqq \uline{T_{i,j}T_{e,f';e',f}-T_{e,j}T_{i,f';e',f}}-T_{e',f}T_{e,f';i,j}+T_{e',f'}T_{e,f;i,j}\in \ker(\varphi). 
        \end{align*}
        Notice that the marked binomial in 
        \(F_3'\)
        belongs to the Gr\"obner basis $\calF$. Since
        \begin{align*}
            \ini_{>_{\lex}}(\varphi(T_{i,j}T_{e,f';e',f}-T_{e,j}T_{i,f';e',f})) = x_e x_{e'} p_{f'} q_i y_f y_j ,  \\
            \ini_{>_{\lex}}(\varphi(T_{e',f}T_{e,f';i,j})) = x_e x_{e'} p_{f'} q_i y_f y_j , \\
            \ini_{>_{\lex}}(\varphi(T_{e',f'}T_{e,f;i,j}))= x_e x_{e'} p_f q_i y_{f'} y_j, 
        \end{align*}
        the polynomial in 
        $F_3'$ satisfies the lifting requirement and provides the remedy in this case.

    \item $F_4\coloneqq \uline{T_{i,j}T_{e,f';e',f}-T_{e,j}T_{i,f';e',f}}+T_{e',j}T_{i,f';e,f}$ with $i < e$ and $j \le f$. 

        If $j\le f'$, the marked binomial is of type \eqref{eqn:reduction_24}. The rectangular region determined by $(i,j)$ and $(e',f)$ is contained in $\Gamma$. Since the cell set of $F_4$ is contained in this region, $F_4$ belongs to $R$. Consequently, the stated lifting still holds.

        If instead $j> f'$ and $T_{i,f';e',f}\in R$, the marked binomial is of type \eqref{eqn:reduction_24}. The rectangular region determined by $(i,f')$ and $(e',f)$ is contained in $\Gamma$. Since the cell set of $F_4$ is contained in this region, $F_4$ belongs to $R$. Consequently, the stated lifting still holds.

        If $j> f'$ and $T_{i,f';e',f}\notin R$, we will use the reduction \eqref{eqn:reduction_42}. Whence, we turn to 
        \begin{align*}
            F_4'\coloneqq \uline{T_{i,j}T_{e,f';e',f}-T_{i,f}T_{e,f';e',j}}-T_{e,f'}T_{i,j;e',f}+T_{e',f'}T_{i,j;e,f}\in \ker(\varphi). 
        \end{align*}
        Notice that the marked binomial in
        $F_4'$ belongs to the Gr\"obner basis $\calF$. Since
        \begin{align*}
            \ini_{>_{\lex}}(\varphi(T_{i,j}T_{e,f';e',f}-T_{i,f}T_{e,f';e',j}) = x_i x_{e} p_{j} q_{e'} y_{f'} y_f ,  \\
            \ini_{>_{\lex}}(\varphi(T_{e,f'}T_{i,j;e',f})) = x_i x_{e} p_{j} q_{e'} y_{f'} y_f , \\
            \ini_{>_{\lex}}(\varphi(T_{e',f'}T_{i,j;e,f}))= x_i x_{e'} p_j q_e y_{f'} y_f, 
        \end{align*}
        the polynomial 
        $F_4'$ satisfies the lifting requirement and provides the remedy in this case.

    \item $F_5\coloneqq \uline{T_{i,j';i',j}T_{e,f';e',f}-T_{i,f';i',f}T_{e,j';e',j}}$ with $i \le e$, $i' \le e'$, $j' \le f'$, and $j \le f$. 

        Since this polynomial is a binomial and belongs to $\ker(\varphi)$, it is already lifted.

    \item $F_6\coloneqq \uline{T_{i,j';i',j}T_{e,f';e',f}-T_{e,f';i',f}T_{i,j';e',j}}+T_{e,f';i,f}T_{i',j';e',j}$ with $i > e$, $i' < e'$, $j' \le f'$, and $j \le f$. 

        Whether $j\ge f'$ or not, it is straightforward to verify that the cell set of $F_6$ is contained in the closure $\cl(\Cell(T_{i,j';i',j}T_{e,f';e',f}))$. Consequently, the marked binomial is of type \eqref{eqn:reduction_44}, and $F_6$ belongs to $R$. Therefore, the stated lifting still holds.

    \item $F_7\coloneqq \uline{T_{i,j';i',j}T_{e,f';e',f}-T_{i,f';e',f}T_{e,j';i',j}}+T_{e',j';i',j}T_{i,f';e,f}$ with $i < e$, $i' > e'$, $j' \le f'$, and $j \le f$. 

        Whether $j\ge f'$ or not, it is straightforward to verify that the cell set of $F_7$ is contained in the closure $\cl(\Cell(T_{i,j';i',j}T_{e,f';e',f}))$. Consequently, the marked binomial is of type \eqref{eqn:reduction_44}, and $F_7$ belongs to $R$. Therefore, the stated lifting still holds.

    \item $F_8\coloneqq \uline{T_{i,j';i',j}T_{e,f';e',f}-T_{i,j';i',f}T_{e,f';e',j}}+T_{i,j;i',f}T_{e,f';e',j'}$ with $i \le e$, $i' \le e'$, $j' > f'$ and $j < f$. 

        Whether $i'\ge e$ or not, it is straightforward to verify that the cell set of $F_8$ is contained in the closure $\cl(\Cell(T_{i,j';i',j}T_{e,f';e',f}))$. Consequently, the marked binomial is of type \eqref{eqn:reduction_44}, and $F_8$ belongs to $R$. Therefore, the stated lifting still holds.

    \item $F_9\coloneqq \uline{T_{i,j';i',j}T_{e,f';e',f}-T_{i,f';i',j}T_{e,j';e',f}}+T_{i,j';i',f'}T_{e,f;e',j}$ with $i \le e$, $i' \le e'$, $j' < f'$, and $j>f$. 

        Since $T_{e,f;e',j}$ might not be available in $R$, $F_9$ is not the optimal lifting in this case. We can instead use
        \begin{equation*}
            F_9'\coloneqq\uline{T_{i,j';i',j}T_{e,f';e',f}-T_{i,f';i',j}T_{e,j';e',f}}+T_{i,f;i',j}T_{e,j';e',f'} \in \ker(\varphi), 
        \end{equation*}
        which has the same underlined binomial.
        Whether $i'\ge e$ or not, it is straightforward to verify that the cell set of $F_9'$ is contained in the closure $\cl(\Cell(T_{i,j';i',j}T_{e,f';e',f}))$. Consequently, the marked binomial is of type \eqref{eqn:reduction_44}, and $F_9'$ belongs to $R$. Therefore, the stated lifting still holds.
        Furthermore, since this is a three-terms polynomial, it is trivially that
        \[
            \ini_{>_{\lex}}(\varphi(T_{i,j';i',j}T_{e,f';e',f}-T_{i,f';i',j}T_{e,j';e',f}))=- \ini_{>_{\lex}}(\varphi(T_{i,f;i',j}T_{e,j';e',f'})).
        \]
        Thus, 
        the polynomial $F_9'$
        satisfies the lifting requirement and provides the remedy in this case.

    \item
        $F_{10}\coloneqq \uline{T_{i,j';i',j}T_{e,f';e',f}-T_{e,j';i',f}T_{i,f';e',j}}+T_{e,j';i,f}T_{i',f';e',j}+T_{i,j;e',f}T_{e,f';i',j'}-T_{i',j;e',f}T_{e,f';i,j'}$ with $i > e$, $i' < e'$, $j' > f'$, and $j < f$. 

        It is straightforward to verify that the cell set of $F_{10}$ is contained in the closure $\cl(\Cell(T_{i,j';i',j}T_{e,f';e',f}))$. Consequently, the marked binomial is of type \eqref{eqn:reduction_44}, and $F_{10}$ belongs to $R$. Therefore, the stated lifting still holds.

    \item $F_{11}\coloneqq \uline{T_{i,j';i',j}T_{e,f';e',f}-T_{e,j';i',f}T_{i,f';e',j}}+T_{i,f';e,j}T_{e',j';i',f}+T_{e,f;i',j}T_{i,j';e',f'}-T_{e',f;i',j}T_{i,j';e,f'}$ with $i < e$, $i' > e'$, $j' < f'$, and $j>f$. 

        It is straightforward to verify that the cell set of $F_{11}$ is contained in the closure $\cl(\Cell(T_{i,j';i',j}T_{e,f';e',f}))$. Consequently, the marked binomial is of type \eqref{eqn:reduction_44}, and $F_{11}$ belongs to $R$. Therefore, the stated lifting still holds.

    \item \label{item_sagbi_l}
        $F_{12}\coloneqq \uline{T_{i,j';i',j}T_{e,f';e',f}-T_{e,f';i',j}T_{i,j';e',f}}+T_{e,f';i,j}T_{i',j';e',f}+T_{i,f;e',j}T_{e,j';i',f'}-T_{i',f;e',j}T_{e,j';i,f'} $ with $i > e$, $i' < e'$, $j' < f'$, and $j > f$.

        Since $T_{i,f;e',j}$ might not be available in $R$, $F_{12}$ is not the optimal lifting in this case. We can instead use
        \begin{align*}
            F_{12}'\coloneqq &\uline{T_{i,j';i',j}T_{e,f';e',f}-T_{e,f';i',j}T_{i,j';e',f}}+T_{e,f;i',j}T_{i,j';e',f'} \notag\\
            &\qquad\qquad\qquad -T_{e,f;i,j}T_{i',j';e',f'}+T_{e,f';i,j}T_{i',j';e',f}\in \ker(\varphi), 
        \end{align*}
        which has the same underlined binomial.
        Since
        \begin{align*}
            \ini_{>_{\lex}}(\varphi(T_{i,j';i',j}T_{e,f';e',f}-T_{e,f';i',j}T_{i,j';e',f})) &= -x_e x_i p_{j'} p_f q_{i'} q_{e'} y_{f'} y_j, \\
            \ini_{>_{\lex}}(\varphi(T_{e,f;i'j}T_{i,j';e',f'}))&=x_e x_i p_{j'} p_f q_{i'} q_{e'} y_{f'} y_j, \\
            \ini_{>_{\lex}}(\varphi(T_{e,f;i,j}T_{i',j';e',f'}))&=x_e x_{i'} p_{j'} p_f q_i q_{e'} y_{f'} y_j, \\
            \ini_{>_{\lex}}(\varphi(T_{e,f';i,j}T_{i',j';e',f}))&=x_e x_{i'} p_{j'} p_{f'} q_i q_{e'} y_f y_j,
        \end{align*}
        the polynomial $F_{12}'$
        satisfies the lifting requirement and provides the remedy in this case.
    \item $F_{13}\coloneqq \uline{T_{i,j';i',j}T_{e,f';e',f}-T_{e,f';i',j}T_{i,j';e',f}}+T_{i,j';e,f}T_{e',f';i',j}+T_{e,j;i',f}T_{i,f';e',j'}-T_{e',j;i',f}T_{i,f';e,j'}$ with $i < e$, $i' > e'$, $j' > f'$, and $j < f$.

        This case is identical to the case in \ref{item_sagbi_l} by symmetry with respect to the anti-diagonal line.
\end{enumerate} 

\begin{Remark}
    A thorough examination reveals that when we initiate with a quadratic monomial $g$ in $R$, the discussion above appears incomplete. The omitted cases arise from two primary considerations: First, certain cases are excluded due to symmetry, as their inclusion would be redundant. Second, the monomial $g$ is already in its standard form, consequently precluding the possibility of any binomial in the Gr\"obner basis $\calF$ having $g$ as its leading monomial.
\end{Remark}

\subsection{Proof of the main theorem}
After verifying the proper lifting of binomials generators of the defining ideal of $\KK[\LT(H)]$, we proceed to present the standard proof of our main theorem. Let $\calJ$ be the set of properly lifted polynomials $F_i$ and $F_i'$
as defined in the previous subsection.

\begin{Theorem}
    \begin{enumerate}[a]
        \item \label{thm:main_a}
            The set $H$ forms a \textsc{Sagbi} basis for the binomial edge ring $\KK[G(J_\calG)]$ with respect to the lexicographic order $>_{\lex}$ on $S$. In particular,  $\ini_{>_\lex}(\KK[G(J_\calG)])=\KK[\LT(H)]$. 
        \item \label{thm:main_b}
            The set $\calJ$ is a quadratic G\"{o}bner basis of the defining ideal of binomial edge ring $\KK[G(J_\calG)]$. 
        \item The binomial edge ring $\KK[G(J_\calG)]$ is a Koszul Cohen-Macaulay normal domain. Moreover, when $\KK$ is of characteristic $0$, $\KK[G(J_\calG)]$  has rational singularities; when $\KK$ is of positive characteristic, $\KK[G(J_\calG)]$ is $F$-rational.
    \end{enumerate}
\end{Theorem}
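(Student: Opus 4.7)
The plan is to invoke the Sagbi lifting criterion of \cite[Corollaries~2.1, 2.2]{Sagbi}. By \Cref{thm:varphi_star_GB}, the set $\calF$ is a quadratic Gr\"obner basis for $\ker(\varphi^*)$, the defining ideal of the initial algebra $\KK[\LT(H)]$. The case analysis in \S\ref{subsection:lifting} exhibits, for each binomial $g-\widetilde{g}\in\calF$, a polynomial in $\ker(\varphi)$---namely $F_i$ or, when the naive lift falls outside $R$, its remedy $F_i'$---whose $>_\lex$-leading monomial matches $\varphi^*(g)$. These lifts are exactly the elements of $\calJ$. The lifting criterion then immediately yields part (a): $H$ is a Sagbi basis of $\KK[G(J_\calG)]$ with respect to $>_\lex$, and therefore $\ini_{>_\lex}(\KK[G(J_\calG)])=\KK[\LT(H)]$. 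Part (b) follows at once via the Sagbi-to-Gr\"obner transfer: the set $\calJ$ constitutes a Gr\"obner basis of $\ker(\varphi)$ under a suitable monomial order on $R$ compatible with the lift, and every element of $\calJ$ is quadratic by construction.

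For part (c), the Koszul property is a direct consequence of the quadratic Gr\"obner basis in (b) through the standard chain of implications ``quadratic monomial initial ideal $\Rightarrow$ Koszul initial algebra $\Rightarrow$ Koszul original algebra'' via Gr\"obner deformation. The remaining geometric properties are extracted from the initial algebra: the leading terms of the binomials in $\calF$ are squarefree products of two variables in $R$, so $\ker(\varphi^*)$ admits a squarefree initial ideal. By Sturmfels' theorem, the toric algebra $\KK[\LT(H)]$ is therefore normal, and hence Cohen--Macaulay by Hochster's theorem. As a normal affine semigroup ring, $\KK[\LT(H)]$ has rational singularities in characteristic zero and is $F$-rational in positive characteristic. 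These properties transfer from $\KK[\LT(H)]$ back to $\KK[G(J_\calG)]$ through the Sagbi/Gr\"obner flat degeneration, using the standard preservation results for normality, Cohen--Macaulayness, rational singularities, and $F$-rationality under such deformations.

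The main obstacles in this program---constructing the quadratic Gr\"obner basis in \Cref{thm:varphi_star_GB} and carrying out the thirteen-case lifting in \S\ref{subsection:lifting}---have already been handled. What remains for a formal write-up is essentially bookkeeping: verifying that in each lifted polynomial the distinguished monomial is indeed the $>_\lex$-leading one so that the Sagbi criterion applies, and then invoking the standard machinery of Gr\"obner/Sagbi degenerations to pull ring-theoretic and singularity properties back from the initial algebra to $\KK[G(J_\calG)]$.
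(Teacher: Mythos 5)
Your proposal is correct and follows essentially the same route as the paper: parts (a) and (b) via the Sagbi lifting criterion of \cite{Sagbi} combined with \Cref{thm:varphi_star_GB} and the case analysis of Subsection~\ref{subsection:lifting}, Koszulness from the quadratic Gr\"obner basis, normality and Cohen--Macaulayness of $\KK[\LT(H)]$ from the squarefree initial ideal via Sturmfels and Hochster, and the transfer of all properties back to $\KK[G(J_\calG)]$ through the Sagbi deformation (the paper packages this last step as \cite[Corollary~2.3]{Sagbi}).
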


\begin{proof}
    The results \ref{thm:main_a} and \ref{thm:main_b} are direct consequences of \cite[Proposition 1.1 and Corollaries 2.1, 2.2]{Sagbi}, \Cref{thm:varphi_star_GB}, and the lifting verification presented in Subsection \ref{subsection:lifting}.
    Since $\calJ$ is a quadratic Gr\"{o}bner basis for the defining ideal of $\KK[G(J_\calG)]$, it follows from \cite[Theorem 6.7]{EHgbBook} that $\KK[G(J_\calG)]$ is Koszul. Moreover, its initial algebra $\KK[\LT(H)]$ is a semigroup ring whose defining ideal has a squarefree initial ideal. According to \cite[Proposition 13.15]{Sturmfels}, $\KK[\LT(H)]$ is normal, and consequently Cohen--Macaulay, by \cite[Theorem 1]{Hochster}. The remaining conclusions are derived from \cite[Corollary 2.3]{Sagbi}.
    \qedhere 
\end{proof}

\section{Application: the Krull dimension}

We conclude this work by providing a combinatorial formula for the Krull dimension of the binomial edge ring $\KK[G(J_{\calG})]$. More precisely, the formula is given through the enumeration of specific cells within the corresponding Ferrers diagram. We begin by defining these relevant cells.

\begin{Definition}
    Let $(i,j)$ be a cell in $\Gamma$. If its northwestern neighbor $(i-1,j-1)$ does not lie in $\Gamma$, we call $(i,j)$ a \emph{northwestern perimeter cell} of $\Gamma$. If the northwestern neighbor $(i-1,j-1)$ lie in $\Gamma$, but the southeastern neighbor $(i+1,j+1)$ does not, we call $(i,j)$ a \emph{southeastern perimeter cell} of $\Gamma$. The set of northwestern perimeter cells is denoted by $\bdP_{NW}=\bdP_{NW}(\Gamma)$, and the set of southeastern perimeter cells is denoted by $\bdP_{SE}=\bdP_{SE}(\Gamma)$. Finally, let $\bdP=\bdP_{NW}\sqcup \bdP_{SE}$.
\end{Definition}

The following example provides a visual illustration of the definitions above.

\begin{Example} 
    \Cref{fig:NWP} shows the skew Ferrers diagram 
    $\Gamma=\bdlambda/\bdmu$ 
    for $\bdlambda=(9,8,8,6,5,5,5,2,2)$ and $\bdmu=(5,4,4,3,2,2,2,0,0)$. Its northwestern perimeter cells are marked with $\triangle$, and its southeastern perimeter cells are marked with $\times$. 
    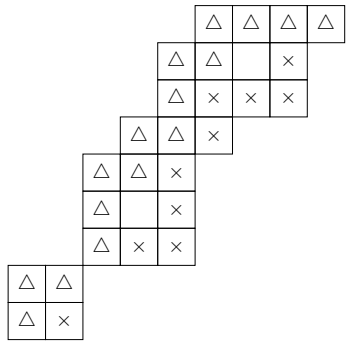
\begin{figure}[h]
        \scalebox{0.7}{
            \begin{tikzpicture}[x=2\unitlength,y=2\unitlength,inner sep=0pt]
                \draw (45,90) rectangle (55,80);
                \draw (35,80) rectangle (45,70);
                \draw (45,80) rectangle (55,70);
                \draw (55,90) rectangle (65,80);
                \draw (55,80) rectangle (65,70);
                \draw (65,80) rectangle (75,70);
                \draw (65,90) rectangle (75,80);
                \draw (75,90) rectangle (85,80);
                \draw (35,70) rectangle (45,60);
                \draw (45,70) rectangle (55,60);
                \draw (55,70) rectangle (65,60);
                \draw (65,70) rectangle (75,60);
                \draw (25,60) rectangle (35,50);
                \draw (35,60) rectangle (45,50);
                \draw (45,60) rectangle (55,50);
                \draw (15,50) rectangle (25,40);
                \draw (25,50) rectangle (35,40);
                \draw (35,50) rectangle (45,40);
                \draw (15,40) rectangle (25,30);
                \draw (25,40) rectangle (35,30);
                \draw (35,40) rectangle (45,30);
                \draw (15,30) rectangle (25,20);
                \draw (25,30) rectangle (35,20);
                \draw (35,30) rectangle (45,20);
                \draw (-5,20) rectangle (5,10);
                \draw (5,20) rectangle (15,10);
                \draw (-5,10) rectangle (5,0);
                \draw (5,10) rectangle (15,0);
                \draw (70,75) node {$\times$};
                \draw (60,65) node {$\times$};
                \draw (70,65) node {$\times$};
                \draw (40,45) node {$\times$};
                \draw (50,55) node {$\times$};
                \draw (50,65) node {$\times$};
                \draw (40,35) node {$\times$};
                \draw (40,25) node {$\times$};
                \draw (30,25) node {$\times$};
                \draw (10,5) node {$\times$};
                \draw (80,85) node {$\triangle$};
                \draw (20,25) node {$\triangle$};
                \draw (20,35) node {$\triangle$};
                \draw (20,45) node {$\triangle$};
                \draw (30,45) node {$\triangle$};
                \draw (40,55) node {$\triangle$};
                \draw (30,55) node {$\triangle$};
                \draw (40,65) node {$\triangle$};
                \draw (40,75) node {$\triangle$};
                \draw (50,75) node {$\triangle$};
                \draw (50,85) node {$\triangle$};
                \draw (60,85) node {$\triangle$};
                \draw (70,85) node {$\triangle$};
                \draw (0,15) node {$\triangle$};
                \draw (0,5) node {$\triangle$};
                \draw (10,15) node {$\triangle$};
        \end{tikzpicture}}
        \caption{Perimeter cells}
        \label{fig:NWP}
    \end{figure}
\end{Example}

The final theorem of this work is stated as follows.

\begin{Theorem}
    \label{thm:dim}
    The Krull dimension of $\KK[G(J_\calG)]$ is equal to the cardinality $\#\bdP$.
\end{Theorem}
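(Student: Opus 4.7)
My plan is to reduce the Krull dimension to a count of connected components in two auxiliary bipartite graphs, by exploiting the \textsc{Sagbi} basis structure already established. The Main Theorem gives $\dim \KK[G(J_\calG)] = \dim \KK[\LT(H)]$, and since $\KK[\LT(H)]$ is an affine semigroup ring, its Krull dimension equals $(2a+2b) - \dim_\QQ V$, where $V$ is the space of $\KK$-linear forms on the variables vanishing on every monomial of $\LT(H)$.

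Writing $\alpha_i=\ell(x_i)$, $\beta_j=\ell(p_j)$, $\gamma_i=\ell(q_i)$, $\delta_j=\ell(y_j)$, the defining conditions are $\alpha_i+\delta_j=0$ for $(i,j)\in\Gamma$ (from $x_iy_j$) together with $\alpha_{i_1}+\beta_{j_1}+\gamma_{i_2}+\delta_{j_2}=0$ for each legitimate pair (from $x_{i_1}p_{j_1}q_{i_2}y_{j_2}$). Since $(i_1,j_2)\in\Gamma$ for every legitimate pair (because $j_2>j_1>\mu_{i_1}$ and $j_2\le\lambda_{i_2}\le\lambda_{i_1}$), subtracting the degree-two relation collapses the quartic relation to $\beta_{j_1}+\gamma_{i_2}=0$. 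Hence $V = V_1\oplus V_2$ decouples along the $(\alpha,\delta)$- and $(\beta,\gamma)$-coordinates, and a standard bipartite argument yields $\dim V_1 = k_1$ and $\dim V_2 = k_2$, where $k_1$ is the number of connected components of $\calG$ and $k_2$ is that of the auxiliary bipartite graph $\calH$ on $\{p_1,\ldots,p_b\}\sqcup\{q_1,\ldots,q_a\}$ whose edges are $\{\{p_{j_1},q_{i_2}\} : 2\le i_2\le a,\ \mu_{i_2-1}+1\le j_1\le\lambda_{i_2}-1\}$. Consequently $\dim \KK[G(J_\calG)] = 2(a+b) - k_1 - k_2$, and the theorem is equivalent to the two identities $|\bdP_{NW}| = (a+b) - k_1$ and $|\bdP_{SE}| = (a+b) - k_2$.

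For the NW identity I would exhibit $E_{NW}\coloneqq\{\{x_i,y_j\} : (i,j)\in\bdP_{NW}\}$ as a spanning forest of $\calG$ with $k_1$ components, via a row-by-row induction. Appending row $i\ge 2$ brings in $x_i$, a batch of fresh $y$-vertices ($\mu_{i-1}-\mu_i$ of them when row $i$ overlaps row $i-1$, i.e., $\lambda_i>\mu_{i-1}$, and $\lambda_i-\mu_i$ otherwise), and the NW-perimeter edges of row $i$, which form a single star at $x_i$. In the overlap case this star attaches to the existing forest through the already-present vertex $y_{\mu_{i-1}+1}$, keeping the component count of $\calG$ in lockstep; in the disjoint case it spawns a fresh isolated component, again matching $\calG$. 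At every step the invariant $|E_{NW}^{(i)}| = |V^{(i)}| - k_1^{(i)}$ is preserved, yielding $|\bdP_{NW}| = (a+b) - k_1$.

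For the SE identity, associate to each $(i,j)\in\bdP_{SE}$ the edge $\{p_{j-1},q_i\}\in E(\calH)$, made available by the legitimate pair $(i-1,j-1;i,j)$. The components of $\calH$ decompose into isolated vertices together with ``run components,'' each indexed by a maximal block of consecutive \emph{active} rows $i_1<i_1+1<\cdots<i_s$ (\emph{active} meaning $\lambda_i\ge\mu_{i-1}+2$, with consecutive pairs linked when $\lambda_{i_{t+1}}\ge \mu_{i_t-1}+2$). Such a run-component has $p$-interval $[\mu_{i_s-1}+1,\lambda_{i_1}-1]$, hence $s+\lambda_{i_1}-\mu_{i_s-1}-1$ vertices and spanning-tree size $s+\lambda_{i_1}-\mu_{i_s-1}-2$. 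The SE-perimeter contribution in rows $i_1,\ldots,i_s$ telescopes to exactly this number --- $q_{i_t}$ for $t<s$ contributes $\lambda_{i_t}-\lambda_{i_{t+1}}+1$ edges and $q_{i_s}$ contributes $\lambda_{i_s}-\mu_{i_s-1}-1$ --- and the overlap points $p_{\lambda_{i_{t+1}}-1}$ visibly knit the SE-perimeter edges into a single tree spanning the component. Summing over all components (isolated vertices contributing zero) yields $|\bdP_{SE}| = (a+b) - k_2$. Setting up this ``run'' decomposition cleanly before the telescoping falls out is, I expect, the main technical hurdle. Combined with the NW identity, $\dim \KK[G(J_\calG)] = |\bdP_{NW}| + |\bdP_{SE}| = \#\bdP$.
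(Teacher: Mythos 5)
Your proposal is correct, and its skeleton coincides with the paper's: pass to the initial algebra via the \textsc{Sagbi} basis, use the fact that $(i_1,j_2)\in\Gamma$ for every legitimate pair to replace the quartic generator $x_{i_1}p_{j_1}q_{i_2}y_{j_2}$ by $p_{j_1}q_{i_2}$, and split the dimension into an $xy$-part and a $pq$-part. Your dual formulation---dimension $=2(a+b)$ minus the corank of the incidence system, hence $2(a+b)-k_1-k_2$ with $k_1,k_2$ component counts---is equivalent to the paper's transcendence-degree computation, and your spanning-forest proof of $\#\bdP_{NW}=(a+b)-k_1$ is essentially the paper's observation that, on each edge-connected component, the northwestern perimeter cells form a lattice path (a spanning tree) joining the two extreme cells. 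The genuine divergence is in the SE half: the paper identifies the relevant $p_jq_i$'s with the cells of $\Gamma'=\Gamma\setminus\bdP_{NW}$, rotates $\Gamma'$ by $180^\circ$ to another skew diagram $\Gamma''$, and simply reapplies the NW count, using that $\bdP_{NW}(\Gamma'')$ corresponds to $\bdP_{SE}(\Gamma)$; you instead analyze the auxiliary graph $\calH$ from scratch via the run decomposition of active rows and a telescoping edge count. Your route is self-contained and makes the component structure of $\calH$ explicit (and your telescoping does check out: each run contributes $s+\lambda_{i_1}-\mu_{i_s-1}-2$ on both sides), but it is exactly the technical hurdle you anticipate, and the rotation trick eliminates it. If you write your version up, make two bookkeeping points explicit: isolated vertices of $\calG$ and of $\calH$ (empty rows, uncovered columns, inactive rows) must be counted among $k_1$ and $k_2$ for the identities $\#\bdP_{NW}=(a+b)-k_1$ and $\#\bdP_{SE}=(a+b)-k_2$ to hold; and in the non-overlap step of the row induction, row $i$ shares no column with \emph{any} earlier row (not merely row $i-1$), which follows from the monotonicity of $\bdlambda$ and $\bdmu$ and is what guarantees the new star really is a new component.
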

\begin{proof}
    Since $\KK[\LT(H)]$ is the initial algebra of $\KK[G(J_\calG)]$,
    it follows from \cite[Theorem A.16]{BH} and \cite[Corollary 2.3]{Sagbi} that 
    \begin{align*}
        \dim(\KK[G(J_\calG)])&= \dim(\KK[\LT(H)])\\
        &=\dim(\KK[\{x_iy_j\mid T_{i,j}\in R\}\cup\{x_{i_1}p_{j_1}q_{i_2}y_{j_2}\mid T_{i_1,j_1;i_2,j_2}\in R\}])\\
        &=\trdeg_{\KK}(\KK(\{x_iy_j\mid T_{i,j}\in R\}\cup\{x_{i_1}p_{j_1}q_{i_2}y_{j_2}\mid T_{i_1,j_1;i_2,j_2}\in R\}))\\
        &=\trdeg_{\KK}(\KK(\{x_iy_j\mid T_{i,j}\in R\}\cup\{p_{j_1}q_{i_2}\mid T_{i_1,j_1;i_2,j_2}\in R\}))\\
        &=\trdeg_{\KK}(\KK(\{x_iy_j\mid T_{i,j}\in R\}))+\trdeg_{\KK}(\KK(\{p_{j_1}q_{i_2}\mid T_{i_1,j_1;i_2,j_2}\in R\})).
    \end{align*}
    Since $\#\bdP_{NW}+\#\bdP_{SE} =\#\bdP$, it remains to show that
    \begin{align}
        \trdeg_{\KK}(\KK(\{x_iy_j\mid T_{i,j}\in R\}))&= \#\bdP_{NW}
        \label{eqn:trdeg_NW}
        \intertext{and}
        \trdeg_{\KK}(\KK(\{p_{j_1}q_{i_2}\mid T_{i_1,j_1;i_2,j_2}\in R\}))&=
        \#\bdP_{SE}.
        \label{eqn:trdeg_SE}
    \end{align}
    The remaining proof is divided into two parts.
    \begin{enumerate}[a]
        \item Let us first prove the equality in \eqref{eqn:trdeg_NW}.
            If $(i,j),(i,j+1)\in \Gamma$, we write $(i,j)\sim (i,j+1)$. Likewise, if $(i,j),(i+1,j)\in \Gamma$, we also write $(i,j)\sim(i+1,j)$. Clearly, $\sim$ generates an equivalence on the cells of $\Gamma$. The induced equivalence classes will be called \emph{edge-connected components}. Suppose that $\Gamma_1,\dots,\Gamma_t$ are the edge-connected components of $\Gamma$.  The following are clear:
            \begin{enumerate}[i]
                \item each $\Gamma_k$ is still a skew Ferrers diagram;
                \item $\trdeg_{\KK}(\KK(\{x_iy_j\mid T_{i,j}\in R\}))=\sum_{k=1}^t\trdeg_{\KK}(\KK(\{x_iy_j\mid (i,j)\in \Gamma_k\}))$;
                \item $\bdP_{NW}(\Gamma)\cap \Gamma_k=\bdP_{NW}(\Gamma_k)$.
            \end{enumerate} 
            Therefore, to establish \eqref{eqn:trdeg_NW}, it suffices to assume that $k=1$ and $\Gamma$ is edge-connected.

            Without loss of generality, we can further assume that $\mu_a=0$.  Whence, the northeast extreme cell of $\Gamma$ is $(1,b)$ and the southwest extreme cell of $\Gamma$ is $(a,1)$. Meanwhile, the cells of $\bdP_{NW}$ form a path connecting $(a,1)$ with $(1,b)$ inside $\Gamma$. In particular, $
            \#\bdP_{NW}
            =a+b-1$. We will order the cells on this path such that $u_1=(1,b)$ and $u_{a+b-1}=(a,1)$. For each $k=1,2,\dots, a-b-2$, if $u_k=(i,j)$, then $u_{k+1}=(i,j-1)$ or $(i+1,j)$. Whence $\varphi^{*}(T_{u_{k+1}})/\varphi^{*}(T_{u_k})=y_{j-1}/y_j$ or $x_{i+1}/x_i$. Consequently, 
            \begin{align*}
                \KK(x_iy_j\mid (i,j)\in \bdP_{NW})&=\KK\left(x_{1}y_b,\frac{x_2}{x_1},\frac{x_3}{x_2},\dots,\frac{x_a}{x_{a-1}},\frac{y_1}{y_2},\dots,\frac{y_{b-2}}{y_{b-1}},\frac{y_{b-1}}{y_b}\right)\\
                &=\KK\left(x_1y_b,\frac{x_2}{x_1},\frac{x_3}{x_1},\dots,\frac{x_a}{x_1},\frac{y_1}{y_b},\dots,\frac{y_{b-2}}{y_{b}}, \frac{y_{b-1}}{y_b}\right).
            \end{align*}
            Denote this field by $K$. It follows that $\trdeg_{\KK}(K)=a+b-1=
            \#\bdP_{NW}
            $.

            For every $(i+1,j+1)\in \Gamma\setminus \bdP_{NW}$, we have $(i,j),(i+1,j),(i,j+1)\in \Gamma$. Furthermore, from $\varphi^*(T_{i,j}),\varphi^*(T_{i+1,j}),\varphi^*(T_{i,j+1})\in K$, we see that 
            \[
                \varphi^*(T_{i+1,j+1})=\varphi^*(T_{i+1,j})\varphi^*(T_{i,j+1})/\varphi^*(T_{i,j})\in K.
            \]
            Therefore, since $\varphi^\ast(T_{i,j})\in K$ for every $(i,j)\in \bdP_{NW}$, it follows by easy induction that $\varphi^\ast(T_{i,j})\in K$ for every $(i,j)\in \Gamma$. In particular, $K =\KK(\{x_iy_j\mid T_{i,j}\in R\})$. In summary, we have established the equality in \eqref{eqn:trdeg_NW}.

        \item Secondly, we prove the equality in \eqref{eqn:trdeg_SE}. Note that if $T_{i_1,j_1;i_2,j_2}\in R$, then all cells in the rectangular region determined by $(i_1,j_1)$ and $(i_2,j_2)$ are contained in $\Gamma$. In particular, the rectangular region determined by $(i_2-1,j_1;i_2,j_1+1)$ is contained in $\Gamma$. In other words, $T_{i_2-1,j_1;i_2,j_1+1}\in R$. Observe that 
            \[
                \QP_{T_{i_1,j_1;i_2,j_2}}= q_{i_2}p_{j_1} =\QP_{T_{i_2-1,j_1;i_2,j_1+1}}.
            \]
            Inspired by this observation, we refer to the set $\{(i-1,j-1),(i-1,j),(i,j-1),(i,j)\}$ as an \emph{adjacent square region}, denoted by $\sA_{i,j}$.
            Therefore, 
            \[
                \KK(\{p_{j_1}q_{i_2}\mid T_{i_1,j_1;i_2,j_2}\in R\}) = \KK(\{p_{j-1}q_i\mid \sA_{i,j}\subseteq \Gamma\})\isom \KK(\{p_{j}q_i\mid \sA_{i,j}\subseteq \Gamma\}).
            \]
            Notice that $\sA_{i,j}\subseteq \Gamma$ if and only if $(i,j)\in \Gamma'\coloneqq \Gamma\setminus \bdP_{NW}$. 
            Furthermore, we introduce the rotated diagram $\Gamma''\coloneqq\{(a+1-i,b+1-j)\mid (i,j)\in \Gamma'\}$.
            It follows from \eqref{eqn:trdeg_NW} that
            \begin{align*}
                \trdeg_{\KK}(\KK(\{p_{j}q_i\mid \sA_{i,j}\subseteq \Gamma\}))&=
                \trdeg_{\KK}(\KK(\{p_{j}q_i\mid (i,j)\subseteq \Gamma'\}))\\
                &=
                \trdeg_{\KK}(\KK(\{p_{b+1-j}q_{a+1-i}\mid (i,j)\subseteq \Gamma'\}))\\
                &\xlongequal[\text{isomorphism}]{\text{under obvious}}\trdeg_{\KK}(\KK(\{y_{j}x_{i}\mid (i,j)\subseteq \Gamma''\}))\\
                &=\#\bdP_{NW}(\Gamma'').
            \end{align*}
            It remains to notice that $\bdP_{NW}(\Gamma'')$ is precisely the $\bdP_{SE}(\Gamma)$ under the rotation. In particular, 
            $\#\bdP_{NW}(\Gamma'')=\#\bdP_{SE}(\Gamma)$, and we have thus established the equality in \eqref{eqn:trdeg_SE}. This completes the proof. 
            \qedhere
    \end{enumerate} 
\end{proof}

\begin{Example}
    As an illustration, let us consider the diagram depicted in \Cref{fig:NWP} as $\Gamma$. It has two edge-connected components. Furthermore, the corresponding $\Gamma'$ introduced in the proof of \Cref{thm:dim} is drawn in the left subfigure of \Cref{fig:Gamma23}, with the cells in $\bdP_{SE}(\Gamma)$ still marked by $\times$. Around $\Gamma'$, a dotted frame representing the boundary of the diagram $\overline{\Gamma}$ is drawn for positional reference.  Correspondingly, the rotated diagram $\Gamma''$ is drawn in the right subfigure of \Cref{fig:Gamma23}, whose northwestern perimeter cells are marked by $\triangle$.
\end{Example}

\begin{figure}[htbp]
    \scalebox{0.7}{
        \begin{tikzpicture}[x=2\unitlength,y=2\unitlength,inner sep=0pt]  
            \draw (70,75) node {$\times$};
            \draw (60,65) node {$\times$};
            \draw (70,65) node {$\times$};
            \draw (40,45) node {$\times$};
            \draw (50,55) node {$\times$};
            \draw (50,65) node {$\times$};
            \draw (40,35) node {$\times$};
            \draw (40,25) node {$\times$};
            \draw (30,25) node {$\times$};
            \draw (10,5) node {$\times$};
            \draw (55,80) rectangle (65,70);
            \draw (65,80) rectangle (75,70);
            \draw (55,70) rectangle (65,60);
            \draw (65,70) rectangle (75,60);
            \draw (45,60) rectangle (55,50);
            \draw (35,50) rectangle (45,40);
            \draw (25,40) rectangle (35,30);
            \draw (35,40) rectangle (45,30);
            \draw (25,30) rectangle (35,20);
            \draw (35,30) rectangle (45,20);
            \draw (45,70) rectangle (55,60);
            \draw (5,10) rectangle (15,0);
            \draw [dash pattern=on \pgflinewidth off 0.7mm](-5,90) rectangle (75,0);
    \end{tikzpicture}}
    \qquad
    \qquad
    \scalebox{0.7}{
        \begin{tikzpicture}[x=2\unitlength,y=2\unitlength,inner sep=0pt]   
            \draw (35,70) rectangle (45,60);
            \draw (35,60) rectangle (45,50);
            \draw (25,60) rectangle (35,50);
            \draw (25,70) rectangle (35,60);
            \draw (15,40) rectangle (25,30);
            \draw (15,30) rectangle (25,20);
            \draw (-5,30) rectangle (5,20);
            \draw (5,30) rectangle (15,20);
            \draw (-5,20) rectangle (5,10);
            \draw (5,20) rectangle (15,10);
            \draw (55,90) rectangle (65,80);
            \draw [dash pattern=on \pgflinewidth off 0.7mm](-5,90) rectangle (75,0);
            \draw (25,50) rectangle (35,40);
            \draw (60,85) node {$\triangle$};
            \draw (20,25) node {$\triangle$};
            \draw (20,35) node {$\triangle$};
            \draw (30,45) node {$\triangle$};
            \draw (30,55) node {$\triangle$};
            \draw (30,65) node {$\triangle$};
            \draw (40,65) node {$\triangle$};
            \draw (0,25) node {$\triangle$};
            \draw (0,15) node {$\triangle$};
            \draw (10,25) node {$\triangle$};
    \end{tikzpicture}}
    \caption{The diagrams $\Gamma'$ and $\Gamma''$}
    \label{fig:Gamma23}
\end{figure}
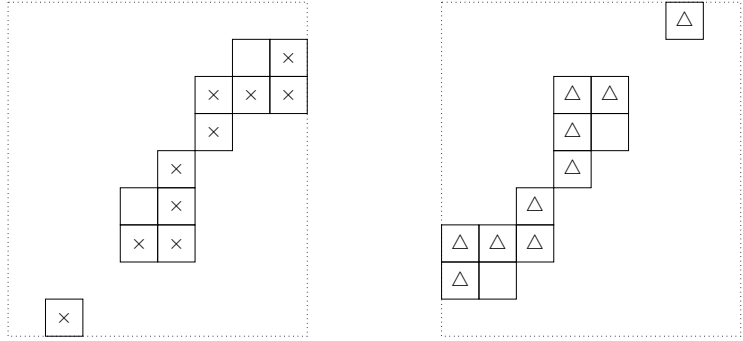

We can easily recover the dimensional result in \cite[Corollary 1.3]{arXiv:2411.07812} from \Cref{thm:dim}.

\begin{Corollary}
    If the Ferrers graph $\calG$ is complete bipartite, then $\dim(\KK[G(J_{\calG})])=2(a+b-2)$.
\end{Corollary}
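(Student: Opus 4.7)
The plan is to apply Theorem~\ref{thm:dim} directly by counting the two classes of perimeter cells for the rectangular diagram $\Gamma=\overline{\Gamma}$ associated with a complete bipartite graph, where $\bdlambda=(b,\ldots,b)$ and $\bdmu=(0,\ldots,0)$.

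First I would enumerate $\bdP_{NW}$. A cell $(i,j)\in\Gamma$ lies in $\bdP_{NW}$ precisely when $(i-1,j-1)\notin\Gamma$, which for the rectangle $\Gamma$ happens exactly when $i=1$ or $j=1$. Hence $\bdP_{NW}$ consists of the top row $\{(1,j):1\le j\le b\}$ together with the left column $\{(i,1):1\le i\le a\}$; these two sets share only the corner $(1,1)$, giving $\#\bdP_{NW}=a+b-1$.

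Next I would enumerate $\bdP_{SE}$. A cell $(i,j)\in\Gamma$ lies in $\bdP_{SE}$ precisely when $(i-1,j-1)\in\Gamma$ and $(i+1,j+1)\notin\Gamma$. The first condition forces $i\ge 2$ and $j\ge 2$, while the second forces $i=a$ or $j=b$. Thus $\bdP_{SE}$ is the union of the portion of the bottom row $\{(a,j):2\le j\le b\}$ and the portion of the right column $\{(i,b):2\le i\le a\}$, intersecting only at $(a,b)$; this yields $\#\bdP_{SE}=(b-1)+(a-1)-1=a+b-3$.

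Finally, combining these two counts gives $\#\bdP=\#\bdP_{NW}+\#\bdP_{SE}=(a+b-1)+(a+b-3)=2(a+b-2)$, and Theorem~\ref{thm:dim} then yields $\dim(\KK[G(J_{\calG})])=2(a+b-2)$. There is no real obstacle here: the entire argument is a direct case-check on the rectangle, so the only thing to be careful about is correctly handling the overlap at the two corner cells $(1,1)$ and $(a,b)$ when taking the union that defines each perimeter.
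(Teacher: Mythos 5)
Your proof is correct and follows exactly the paper's approach: apply Theorem~\ref{thm:dim} to the $a\times b$ rectangle and count $\#\bdP=2(a+b-2)$. The paper simply asserts this count, whereas you carry out the enumeration of $\bdP_{NW}$ and $\bdP_{SE}$ explicitly (correctly handling the corner overlaps), so your writeup supplies the omitted details of the same argument.
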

\begin{proof}
    In this configuration, $\Gamma$ represents an $a\times b$ diagram. The cardinality $\#P$ can be explicitly expressed as $2(a+b-2)$.
\end{proof}

\begin{Question}
    An interesting problem is to classify when the binomial edge ring of a bipartite graph is a Koszul, Cohen--Macaulay, and normal ring.  
    Additionally, it would be valuable to investigate the configurations for which the binomial edge ring admits a \textsc{Sagbi} basis, such that its initial algebra has a defining ideal generated by a quadratic Gr\"obner basis with a squarefree initial ideal.  
    Furthermore, can one determine a dimension formula in terms of the combinatorial data of the bipartite graph?
\end{Question}

\begin{acknowledgment*}
    The first author was partially supported by the AMS-Simons Research Enhancement Grants for Primarily Undergraduate Institution Faculty 2024.  The second author acknowledges partial support from the ``the Fundamental Research Funds for Central Universities'' and ``the Innovation Program for Quantum Science and Technology'' (2021ZD0302902).
\end{acknowledgment*}

\noindent
\textbf{Data availability} Data sharing is not applicable to this article as no datasets were generated or analyzed
during the current study.

\bibliography{SkewDiagram}
\end{document}